\DeclarePairedDelimiter{\ceil}{\lceil}{\rceil}
\newcommand{\R}{\mathbb{R}}
\newcommand{\rs}{r^{\ast}}
\newtheorem{prop}{Proposition}
\newtheorem{theorem}{Theorem}
\newtheorem{lemma}{Lemma}
\newtheorem{remark}{Remark}
\newtheorem{definition}{Definition}
\newtheorem{cor}{Corollary}
\newcommand\blfootnote[1]{%
  \begingroup
  \renewcommand\thefootnote{}\footnote{#1}%
  \addtocounter{footnote}{-1}%
  \endgroup
}
\begin{document}

\phantom{ }
\vspace{4em}


\begin{flushleft}
{\large \bf Lower bounds on mixing norms for the advection diffusion equation in $\R^d$}\\[2em]
{\normalsize \bf Camilla Nobili$^1$ and Steffen Pottel$^2$}\blfootnote{\today}\\[0.5em]
\small \begin{tabbing} 
$^1$ \= Department of Mathematics, University of Hamburg, 20146 Hamburg, Germany.\\
\>E-mail: camilla.nobili@uni-hamburg.de\\
$^2$ \> K\"uhne Logistics University, 20457 Hamburg, Germany.\\
\>E-mail: steffen.pottel@the-klu.org\\
\end{tabbing}
\end{flushleft}

{\small
{\bf Abstract:} An algebraic lower bound on the energy decay for solutions of the advection-diffusion equation in $\R^d$ with $d=2,3$ is derived using the Fourier-splitting method.
Motivated by a conjecture on mixing of passive scalars in fluids, a lower bound on the $L^2$- norm of the inverse gradient of the solution is obtained via gradient estimates and interpolation.
}


\vspace{2em}

\section{Introduction}

We are interested in the long-time behavior of a diffusive concentration field $\theta$ in $\R^d$ with $d=2,3$, which is advected by a time-dependent divergence-free vector field $u$.
The dynamics of $\theta$ are described by the advection-diffusion equation
\begin{equation}\label{AD}
\begin{cases}
 \partial_t\theta+u\cdot \nabla \theta-\kappa\Delta \theta=0 & \mbox{ in } \R^d\times (0,\infty)\\
 \nabla \cdot u=0 & \mbox{ in } \R^d\times (0,\infty)\\
 \theta(x,0)=\theta_0(x) & \mbox{ in } \R^d\,,
\end{cases}
\end{equation}
where $\kappa$ is the molecular diffusion coefficient (usually $\kappa \ll 1$).
Long-time asymptotics for this (or a variation of this) equation have been studied in various contexts. 
Among all the results available in the literature, we want to mention the works of Zuazua et.\ al.\ \cite{DURO2000275, ESCO-ZUA}, in which the large time behavior of solutions of diffusion equations with (non-linear) advection term of the type $a\cdot\nabla(|\theta|^{q-1}\theta)$ is studied exploiting the scaling properties of the equation.
For the advection-diffusion equation \eqref{AD} in $\R^2$ with stationary, periodic or random vector fields, Fannjiang and Papanicolaou in \cite{Fannjiang94convectionenhanced, PAPA-random} studied the \textit{effective diffusivity}, defined as the long-time and space average of $|x|^2\theta(x,t)$,  with $\theta_0$ being a Dirac delta function at the origin. 
In \cite{Fannjiang94convectionenhanced}, a lower bound for the effective diffusivity is derived formulating the problem as a variational principle.
While upper bounds for the $L^p$-norms of the solution of \eqref{AD} have been produced under various assumptions on $u$ by means of variational techniques, scaling analysis, and regularity theory (cf.\cite{CarLos}), lower bounds are more subtle and difficult to find in general (see, for example, the results and discussions in \cite{Nobili-Otto} and references therein).
In this direction, we mention the remarkable result of Maekawa in \cite{Maek08}, where a lower bound for the kernel of an advection-diffusion equation was produced under the constraint $\sup_{t>0}t^{\frac 12}\|u(t)\|_{\infty}<\infty$.
Bounds on the $L^2$-norm of $\theta$ further allow to study the problem of mixing, i.e. the evolution of the concentration field in a solvent towards a uniform distribution.
Discussions on various measures for mixing can be found in \cite{Thi2012} and \cite{DN2017}.
The quantity 
  \begin{equation}\label{def-H-1}
  \|\nabla^{-1}\theta(t)\|_2^2 =\int_{\R^d} (|\xi|^{-1} |\hat{\theta}(\xi,t)|)^2\, d\xi\,
  \end{equation} 
is particularly suitable to describe mixing degrees as ``it downplays the role of small scales''\cite{Thi2012} by suppressing small-scale variations.
In this regard, Miles \& Doering \cite{DM2018} extend the consideration to the quantity
 \begin{equation}\label{def-lambda}
  \lambda(t):=\frac{\|\nabla^{-1}\theta(t)\|_2}{\|\theta(t)\|_2} \, ,
 \end{equation}
 called the \textit{filamentation length}.
Their numerical experiments on the torus entail the following interesting fact: $\lambda$ approaches a minimal value for large times, i.e. $\lim_{t\rightarrow \infty}\lambda(t)=\mbox{constant}$, which is the minimal length scale for filaments in presence of diffusion, named Batchelor-scale.
Motivated by these recent results, we are primarily interested in deriving a lower bound for the energy of the solution of equation \eqref{AD}, under constraints on the energy of $u$.
In particular, this bound is the key ingredient for estimating $\|\nabla^{-1}\theta\|_2$ and subsequently deriving  some understanding of the filamentation length $\lambda$. 

In this work, the methods to find the asymptotic behavior of the solution of \eqref{AD} are inspired by the seminal works of Maria Schonbeck on the Navier-Stokes equation \cite{Scho1986, Scho1991} and depend crucially on the decay properties of the vector field and its gradient in time. 
In order to derive a lower bound for $\|\theta(t)\|_2$, we view advection as a forcing term for the heat equation, i.e.
$$\partial_t\theta-\kappa\Delta \theta=-u\cdot \nabla \theta\,.$$
This point of view is very convenient, as it allows us to exploit the representation formula of the solution
$$\theta(x,t)=\int_{\R^d}G(x-y,t) \, \theta_0(y)\, dy+\int_0^t\int_{\R^d} G(x-y, t-s) \, (u\cdot \nabla \theta)(y,s)\, dy\, ds\,,$$
with $G$ being the heat kernel.
Decomposing the scalar field $\theta$ as
$$\theta=T+(\theta-T)\,,$$
where $T$ solves the heat equation, a lower bound on the solution of the advection-diffusion equation follows directly from the combination of a lower bound on the solution of the heat equation $T$ and a suitable upper bound for the difference of solutions $\theta - T$ since for all time the bound
$$\|\theta(t)\|_2\geq \|T(t)\|_2-\|(\theta-T)(t)\|_2 \, ,$$
holds. 
We remark that suitable refers to vector fields $u$ such that $\|T(t)\|_2\geq\|(\theta-T)(t)\|_2$ for large times.
Furthermore detecting necessary assumptions on the data of the present problem, which render the previous inequalities valid, requires a careful analysis.

As mentioned in \cite{BioScho09} the energy decay rate is dependent
on the actual form of the data and not on initial energy. In particular they show that the solution of the heat equation decays at most exponentially if and only if $\theta_0$ is zero in some neighborhood of the origin in Fourier space.
Lower bounds instead can be deduced if the Fourier transform of the initial data is larger than a positive constant in a ball of radius $\delta$ centered at the origin.
Together with the assumption $\theta_0\in L^1\cap L^2$, this class includes Gaussian-like initial data but excludes mean-free initial data, i.e. $\theta_0$ such that
$$0= \hat{\theta}_0(0)=\int \theta_0(x)\,dx\,.$$

This restrictive condition may be relaxed. 
Following the ideas in \cite{BioScho09,niche2015decay,niche2016decay}, we will introduce the notion of \textit{decay character}
\begin{equation}\label{eq:introRS}
\rs=\rs(\theta_0)=\sup\left\{r \in \left(-\frac d2, \infty\right) \Big|\lim_{\delta\rightarrow 0}\delta^{-2r-d}\int_{|\xi|\leq \delta}|\hat{\theta}_0|\, d\xi=0\right\}\,,
\end{equation}
 to describe the decay of the $L^2$ of the initial data at the origin in Fourier space (see Definition \ref{decay-char}).
The number $\rs$ (if it exists) will then play a crucial role in the decay of the solution of the equation.
In Remark \ref{re:BrandoleseRS}, we discuss a result of Brandolese \cite{B16}, which relaxes the requirements on decay characters allowing classes of initial data, for which the $\rs$ of \eqref{eq:introRS} is not well-defined.

Because of the perturbation approach we are using ($\theta$ appears on the right-hand side of the representation formula) the upper bound on the difference $\theta-T$ relies on an upper bound on the $L^2$-norm of the solution $\theta$. 
Using the {\it Fourier-splitting} method we establish that for any divergence-free vector field such that $\|u(t)\|_{L^2}\sim (1+t)^{-\alpha}$ with $\alpha>\frac 12-\frac d4$ we have 
\begin{equation}\label{upper-bound-comp}
   \|\theta(t)\|_2\lesssim C \kappa^{-\max\{\frac d4+\frac{\rs}{2},m\}}(1+t)^{ -\min\{\frac d4+\frac{\rs}{2}, \frac d4+\frac 12\}} \quad \mbox{ for } d=2,3\, ,
\end{equation}
where $m$ is a rational number. This type of result is not new, as similar estimates were proven (for instance) in \cite{niche2015decay,niche2016decay} for dissipative quasi-geostrophic equation, the compressible $3d$ Navier Stokes equations and the Navier-Stokes-Voigt equation.
Dealing with a passive scalar equation, in our analysis the condition on $u$ comes out of the analysis and we can enlarge the class of ``admissible'' velocity field considered in \cite{CarLos}.
While the velocity field in the latter must satisfy $\|u(t)\|_{\infty}\sim t^{-\frac 12}$, in our case any decay of the energy of $u$ is sufficient in $2d$, whereas in $3d$ it might even increase.

The \textit{Fourier-splitting} technique was introduced by Maria Schonbeck in \cite{Schon1985} in order to derive $L^2$-decay estimates for weak solutions to the Navier-Stokes equations. This method, applied to \eqref{AD}, relies on the following observation: the standard energy identity can be written in Fourier space as
$$\frac{d}{dt}\int_{\R^d}|\hat{\theta}(\xi,t)|^2\, d\xi=-2\kappa\int_{\R^d}|\xi|^2|\hat{\theta}(\xi,t)|^2\, d\xi\,,$$
where $\hat{\theta}$ is the Fourier transform of the solution.
With this the term "Fourier-splitting" refers to a decomposition of the frequency domain into two time-dependent subdomains, yielding a first-order differential inequality for the spatial $L^2$-norm of $\hat \theta$. Incidentally, an upper bound on the $L^2$-norm of the solution is produced by estimating the integral of $|\hat \theta(\xi)|^2$ over an $d$-dimensional
sphere centered at the origin with an appropriate time-dependent radius. 
Apart from the previously mentioned works on the Navier-Stokes equation the {\it Fourier splitting} method has been successfully applied to produce upper bounds on the solutions of the Boussinesq systems \cite{BS2012}, the quasi-geostrophic equation \cite{SchoScho2006}, the modified quasi-geostrophic equation \cite{FNP16}, the Camassa-Holm equations \cite{T18}, the electron inertial Hall-MHD system \cite{FZ19}, and for a magneto-micropolar system \cite{NP20}. 
\begin{theorem}\label{th1}
Let  $\theta_0=\theta_0(x)$ satisfy
\begin{equation}\label{assumption-initial-data}
    \theta_0\in L^2(\R^d) \qquad \mbox{with decay character } \rs(\theta_0)=\rs \mbox{ with } -\frac d2<\rs<1.
\end{equation}
Moreover assume  
 \begin{equation}\label{assumption-u}
 \|u(t)\|_{2}\sim (1+t)^{-\alpha} \quad \mbox{ with } \quad \alpha>\frac{\rs}{2}+\frac 12\,.
 \end{equation}
If the time $t >0$ is sufficiently large so that
\begin{equation}\label{condition1}
\begin{array}{llr}
(1+t)^{\rs-1}&\leq \kappa^{m+\frac 12-\frac d4-\rs} \qquad \mbox{ for } &\alpha\geq \frac 32-\frac{\rs}{2}\\
or\\
(1+t)^{\frac \rs 2-\alpha+\frac 12}&\leq \kappa^{m+\frac 12-\frac d4-\rs} \qquad\mbox{ for }\quad  &\frac \rs 2+\frac 12 < \alpha\leq \frac 32-\frac{\rs}{2}
\end{array}
\end{equation}
hold for some rational number $m\geq d+2$ (which may depend on $\alpha$), then there exists a constant $C>0$ depending on $d, \rs, \|\theta_0\|_2$ and $\alpha$ such that
\begin{equation}\label{estimate1}
 \|\theta(t)\|_2\geq C \kappa^{-\frac d4-\frac{\rs}{2}} (1+t)^{-\frac{d}{4}-\frac{\rs}{2}}\,.
\end{equation}
\end{theorem}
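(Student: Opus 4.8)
The plan is to follow the decomposition $\theta = T + (\theta - T)$ announced in the introduction, where $T$ solves the heat equation with the same initial datum $\theta_0$, and to combine a lower bound on $\|T(t)\|_2$ with an upper bound on $\|(\theta-T)(t)\|_2$, using the elementary inequality $\|\theta(t)\|_2 \geq \|T(t)\|_2 - \|(\theta-T)(t)\|_2$. First I would record the sharp heat-equation asymptotics associated with the decay character: under \eqref{assumption-initial-data}, Plancherel together with the definition \eqref{eq:introRS} yields $\|T(t)\|_2 \gtrsim (\kappa t)^{-d/4 - \rs/2}$ for large $t$ (this is the two-sided estimate from \cite{BioScho09, niche2015decay}, specialized to the heat semigroup and tracking the $\kappa$-dependence by the rescaling $t \mapsto \kappa t$). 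This gives precisely the target rate on the right-hand side of \eqref{estimate1}, so the whole game is to show the remainder $\theta - T$ is, for $t$ large in the sense of \eqref{condition1}, at most half of this.

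Second, I would estimate $\theta - T$ via the Duhamel formula: $(\theta - T)(t) = -\int_0^t e^{\kappa(t-s)\Delta}(u\cdot\nabla\theta)(s)\,ds = -\int_0^t \nabla\cdot\big(e^{\kappa(t-s)\Delta}(u\theta)(s)\big)\,ds$, using $\nabla\cdot u = 0$ to move the derivative onto the heat kernel. Taking $L^2$ and using the standard kernel bound $\|\nabla e^{\kappa\tau\Delta} f\|_2 \lesssim (\kappa\tau)^{-1/2 - d/4}\|f\|_1$ (or $(\kappa\tau)^{-1/2}\|f\|_2$ near $s = t$ to keep the integral convergent), together with $\|u(s)\theta(s)\|_1 \leq \|u(s)\|_2\|\theta(s)\|_2$ and Hölder in the high-frequency regime, reduces everything to inserting the a priori upper bound \eqref{upper-bound-comp} for $\|\theta(s)\|_2$ and the hypothesis \eqref{assumption-u} on $\|u(s)\|_2$. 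Carrying out the resulting time integral $\int_0^t (\kappa(t-s))^{-1/2 - d/4}(1+s)^{-\alpha}\|\theta(s)\|_2\,ds$ — splitting at $s = t/2$ as usual, controlling the near-diagonal piece with the $L^2 \to L^2$ bound and the $\rs < 1$ restriction, which guarantees $\alpha > \rs/2 + 1/2 > 0$ and hence integrability — produces a bound of the form $\|(\theta-T)(t)\|_2 \lesssim \kappa^{-\max\{d/4+\rs/2,\,m\}}(1+t)^{-\beta}$ with an exponent $\beta$ strictly larger than $d/4 + \rs/2$ in the decaying regimes; the precise gain is exactly what conditions \eqref{condition1} are engineered to convert into the statement "the remainder is $\leq \tfrac12 \|T(t)\|_2$". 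The two cases in \eqref{condition1} correspond to whether $\alpha$ is large enough that the heat-tail term $(1+t)^{\rs - 1}$ dominates, or whether the advective contribution $(1+t)^{\rs/2 - \alpha + 1/2}$ is the bottleneck; in each case one checks that the displayed inequality forces the ratio $\|(\theta-T)(t)\|_2 / \|T(t)\|_2 \leq 1/2$, so that $\|\theta(t)\|_2 \geq \tfrac12\|T(t)\|_2 \geq C\kappa^{-d/4-\rs/2}(1+t)^{-d/4-\rs/2}$.

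The main obstacle I anticipate is the bookkeeping of the $\kappa$-powers and the time exponents simultaneously: the a priori bound \eqref{upper-bound-comp} already carries the awkward $\max\{d/4 + \rs/2,\, m\}$ in the $\kappa$-exponent and a $\min$ in the time exponent, and feeding this into the Duhamel integral and then demanding the outcome beat $(\kappa t)^{-d/4-\rs/2}$ is where the constraint $m \geq d+2$ and the precise thresholds in \eqref{assumption-u} and \eqref{condition1} must be reconciled. In particular one must be careful that the time integral near $s=t$ does not lose the needed decay (this is why the derivative is pushed onto the kernel and why the $L^2\to L^2$ estimate is used there), and that the restriction $\rs < 1$ — rather than the weaker $\rs < \infty$ — is genuinely used to keep $\alpha$ in \eqref{assumption-u} positive and the integral $\int_0^{t/2}(1+s)^{-\alpha - d/4 - \rs/2}\,ds$ from contributing a spurious growing factor. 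Everything else is a routine, if lengthy, splitting-and-Hölder computation of the type carried out in \cite{Schon1985, BioScho09, niche2015decay}.
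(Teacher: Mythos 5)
Your overall architecture matches the paper's: the decomposition $\theta=T+\eta$ with $T$ the heat flow, the lower bound $\|T(t)\|_2\gtrsim(\kappa(1+t))^{-d/4-\rs/2}$ coming from the decay character, the a priori upper bound on $\|\theta(s)\|_2$ as the essential input for controlling the remainder, and the reading of \eqref{condition1} as the condition making $\|\eta(t)\|_2\leq\tfrac12\|T(t)\|_2$. However, your proposed estimate of the remainder has a genuine gap at the near-diagonal part of the Duhamel integral, and the patch you suggest does not work under the stated hypotheses. Writing $\eta(t)=-\int_0^t\nabla\cdot\bigl(e^{\kappa(t-s)\Delta}(u\theta)(s)\bigr)\,ds$ and using the $L^1\to L^2$ gradient smoothing bound costs $(\kappa(t-s))^{-1/2-d/4}$, whose exponent is $\geq 1$ for $d=2,3$, so the integral diverges at $s=t$. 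You propose to switch to the $L^2\to L^2$ bound $\|\nabla e^{\kappa\tau\Delta}f\|_2\lesssim(\kappa\tau)^{-1/2}\|f\|_2$ there, but that requires controlling $\|u(s)\theta(s)\|_2$, and the theorem only assumes $u(\cdot,s)\in L^2$ and gives $\theta(\cdot,s)\in L^2$; no Hölder pairing produces $\|u\theta\|_2$ from these. So the "routine splitting-and-Hölder computation" you defer to is exactly the step that fails.

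The paper circumvents this by never leaving Fourier space for the high frequencies: it runs the energy identity for $\eta$, splits frequency space into a shrinking ball $S(t)$ of radius $R(t)\sim(\kappa(1+t))^{-1/2}$ and its complement, absorbs the high-frequency contribution into the dissipation term $-2\kappa\int_{S^c}|\xi|^2|\hat\eta|^2$, and only on $S(t)$ uses the pointwise bound $|\widehat{u\cdot\nabla\theta}(\xi)|\leq|\xi|\,\|u\|_2\|\theta\|_2$ together with $\int_{S(t)}|\xi|^2\,d\xi\sim R(t)^{d+2}$, which is finite precisely because the ball is small. In addition, the energy inequality for $\eta$ produces the forcing term $\|\nabla T(t)\|_\infty\|u(t)\|_2\|\theta(t)\|_2$ (after moving the nonlinearity onto $T$ via incompressibility), controlled by the explicit heat-kernel bound $\|\nabla T(t)\|_\infty\lesssim\|\theta_0\|_2\kappa^{-d/4-1/2}(1+t)^{-d/4-1/2}$ --- an ingredient absent from your outline. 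If you want to repair your route you would need either an extra hypothesis (e.g.\ $u\in L^\infty$ in time-weighted form, as in the Carlen--Loss setting the paper explicitly contrasts with) or to adopt the Fourier-splitting energy argument; as written, the near-diagonal singularity is not controlled.
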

\noindent 
This result does not contradict the energy conservation valid if considering equation \eqref{AD} with $\kappa=0$ for a large class of velocity fields (see Remark \ref{remarkth1}).
In fact notice that the lower bound becomes trivial for $\kappa \rightarrow 0$ since \eqref{condition1} implies $t \rightarrow \infty$, since the exponent of $\kappa$ is positive according to the condition of $m$. 

\bigskip
As mentioned above, the bounds in Theorem \ref{th1} are the key ingredient for the study of $\| \nabla^{-1} \theta\|_2$.
We note that, due to our choice of initial data, $\| \nabla^{-1} \theta_0 \|_2$ is finite in $\R^3$ for $-\frac 12<\rs<1$ and in $\R^2$ for  $0<\rs<1$. 
Using the result in Theorem \ref{th1}, a lower bound on the quantity \eqref{def-H-1} can be obtained ``indirectly'' by using the standard (Gagliardo-Nirenberg) interpolation inequality
$$\|\nabla^{-1}\theta(t)\|_2\geq \frac{\|\theta(t)\|_2^2}{\|\nabla\theta(t)\|_2}\,,$$
if an upper on $\|\nabla \theta(t)\|_2$ can be provided. In Section 3 (see Lemma \ref{UBG}), under a decay assumption on the velocity field of the type 
$
 \|\nabla u(t)\|_{\infty}\sim (1+t)^{-\nu},
$
we derive an upper bound of the form 
\begin{equation}\label{upper-bound-grad-comp}
\|\nabla \theta(t)\|_2\leq C\kappa^{-\frac 12\max\{\frac d4+\frac{\rs}{2}, m\}-1}(1+t)^{-\min\{\frac d4+\frac{\rs}{2}+\frac 12, \frac d4+1\}}f(t)\,,
\end{equation}
where $f(t)$ is a determined function which displays different behavior in time depending on whether $\nu$ is smaller, larger or
equal to one:
\begin{equation*}
\begin{array}{llll}
 f(t) &=& e^{\frac{(1+t)^{-\nu+1}-1}{-\nu+1}} & \mbox{ for } \nu>1\,,\\
 f(t) &=& 1 & \mbox{ for } \nu=1\,,\\
 f(t) &=& (1+t)^{-1}e^{\frac{(1+t)^{-\nu+1}-1}{-\nu+1}}  & \mbox{ for } 0\leq\nu<1\,.
\end{array}
\end{equation*}
Combining \eqref{estimate1} with \eqref{upper-bound-grad-comp} we obtain the following:
\begin{theorem}\label{th2}
 Let the assumptions of Theorem \ref{th1} be satisfied. Additionally suppose that $\rs \in ( 1 - \frac{d}{2}, 1)$
 so that $\|\nabla^{-1} \theta_0\|_2$ is well-defined.
 \begin{itemize}
 \item If 
 \begin{equation}\label{gradient-condition1}
  \|\nabla u(t)\|_{\infty}\sim (1+t)^{-\nu} \mbox{ with } \nu>1\,,
 \end{equation}
 there exists a constant $C>0$ depending on $\|\nabla \theta_0\|_{2}, \|\theta_0\|_2, \rs$ and $d$ such that
  \begin{equation}\label{result1-th2}
     \|\nabla^{-1}\theta(t)\|_2\geq C \kappa^{-\frac d2-\rs+m+\frac 12}e^{-\frac{[(1+t)^{-\nu+1}-1]}{-\nu+1}}(1+t)^{-\frac d4-\frac{\rs}{2}+\frac 12} \,.
  \end{equation}
  \item If
 \begin{equation}\label{gradient-condition2}
  \|\nabla u(t)\|_{\infty}\sim (1+t)^{-1}\,,
 \end{equation}
 there exists a constant $C>0$ depending on $\|\nabla \theta_0\|_{2}, \|\theta_0\|_2, \rs$ and $d$ such that
\begin{equation}\label{result2-th2}
\|\nabla^{-1}\theta(t)\|_2\geq C \kappa^{-\frac d2-\rs+m+\frac 12}(1+t)^{-\frac d4-\frac{\rs}{2}+\frac 12}\, .
\end{equation}
\item If 
 \begin{equation}\label{gradient-condition3}
  \|\nabla u(t)\|_{\infty}\sim (1+t)^{-\nu} \mbox{ with } 0\leq \nu<1\,,
 \end{equation}
 there exists a constant $C>0$ depending on $\|\nabla \theta_0\|_2, \|\theta_0\|_2, \rs$ and $d$ such that
  \begin{equation}\label{result3-th2}
     \|\nabla^{-1}\theta(t)\|_2\geq C \kappa^{-\frac d2-\rs+m+\frac 12}e^{-\frac{[(1+t)^{-\nu+1}-1]}{-\nu+1}}(1+t)^{-\frac d4-\frac{\rs}{2}+\frac 32} \,.
  \end{equation}
\end{itemize}
\end{theorem}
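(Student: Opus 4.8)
The plan is to deduce Theorem~\ref{th2} directly from the two estimates already in hand by interpolation, so that the proof is essentially a bookkeeping exercise once Theorem~\ref{th1} and Lemma~\ref{UBG} are granted. The starting point is the Gagliardo--Nirenberg inequality
\[
\|\nabla^{-1}\theta(t)\|_2\;\geq\;\frac{\|\theta(t)\|_2^{2}}{\|\nabla\theta(t)\|_2}\,,
\]
which follows from Plancherel and Cauchy--Schwarz after writing $|\hat\theta|^2=(|\xi|^{-1}|\hat\theta|)(|\xi|\,|\hat\theta|)$. The additional hypothesis $\rs\in(1-\tfrac d2,1)$ is precisely what guarantees $\|\nabla^{-1}\theta_0\|_2<\infty$, as recalled just before the statement; a short preliminary step is needed to see that this finiteness is propagated by the flow, so that the left-hand side above is finite for all $t>0$ and the inequality is not vacuous.

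Granting that, I would first square \eqref{estimate1} to obtain $\|\theta(t)\|_2^{2}\gtrsim \kappa^{-\frac d2-\rs}(1+t)^{-\frac d2-\rs}$. Next I would simplify the right-hand side of \eqref{upper-bound-grad-comp}: under the running assumptions $\rs<1$ and $m\geq d+2$ one has $\tfrac d4+\tfrac\rs2<\tfrac d4+\tfrac12\leq\tfrac54<d+2\leq m$, so $\max\{\tfrac d4+\tfrac\rs2,m\}=m$, and $\tfrac d4+\tfrac\rs2+\tfrac12<\tfrac d4+1$, so the minimum in the time exponent equals $\tfrac d4+\tfrac\rs2+\tfrac12$; hence \eqref{upper-bound-grad-comp} reduces to $\|\nabla\theta(t)\|_2\lesssim \kappa^{-\gamma}(1+t)^{-\frac d4-\frac\rs2-\frac12}f(t)$ for the appropriate exponent $\gamma$ and with $f$ as listed there.

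Dividing the two bounds, the time exponents combine as $-(\tfrac d2+\rs)-(-\tfrac d4-\tfrac\rs2-\tfrac12)=-\tfrac d4-\tfrac\rs2+\tfrac12$, the powers of $\kappa$ combine to the prefactor $\kappa^{-\frac d2-\rs+m+\frac12}$ appearing in the statement, and the leftover factor is $f(t)^{-1}$. Substituting the three explicit forms of $f$ from \eqref{upper-bound-grad-comp} then splits into the three cases: for $\nu>1$, $f(t)^{-1}=e^{-\frac{(1+t)^{-\nu+1}-1}{-\nu+1}}$ gives \eqref{result1-th2}; for $\nu=1$, $f(t)^{-1}=1$ gives \eqref{result2-th2}; for $0\leq\nu<1$, $f(t)^{-1}$ equals the same exponential times an extra factor $(1+t)$, which raises the time exponent by one and yields \eqref{result3-th2}. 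Carrying the constant dependence on $d,\rs,\|\theta_0\|_2,\|\nabla\theta_0\|_2$ through both input estimates finishes the proof.

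The argument contains no genuinely hard analytic step of its own --- all the work sits in Theorem~\ref{th1} and Lemma~\ref{UBG}. The points requiring care are: (i) checking that $\|\nabla^{-1}\theta(t)\|_2$ remains finite for $t>0$ so that the interpolation inequality applies; (ii) verifying that the large-time window \eqref{condition1} demanded by Theorem~\ref{th1} is compatible with the decay rates \eqref{gradient-condition1}, \eqref{gradient-condition2}, \eqref{gradient-condition3} on $\nabla u$, i.e.\ that $\alpha$ and $\nu$ can be prescribed simultaneously under the stated constraints; and (iii) keeping the three regimes of $f$ straight, so that the final time exponent is $-\tfrac d4-\tfrac\rs2+\tfrac12$ in the first two cases and $-\tfrac d4-\tfrac\rs2+\tfrac32$ in the last.
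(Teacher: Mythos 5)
Your proposal is correct and follows exactly the paper's own route: the Gagliardo--Nirenberg interpolation $\|\theta\|_2^2\leq\|\nabla\theta\|_2\,\|\nabla^{-1}\theta\|_2$ combined with the lower bound of Theorem \ref{th1} and the gradient upper bound of Lemma \ref{UBG}, with the three cases coming from the three regimes of $\nu$. Your exponent bookkeeping matches the stated results (and in fact your $\kappa$-exponent $-\frac d2-\rs+m+\frac 12$ agrees with the theorem statement, whereas the paper's displayed proof contains a sign slip there), so nothing further is needed.
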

\noindent Let us remark that it is possible to find vector fields that simultaneously satisfy \eqref{assumption-u} and \eqref{gradient-condition1}.
For example, it is known \cite{Fujigaki2001AsymptoticPO}, that the unique strong solution of the Navier-Stokes equation in $\R^3$ with $u_0\in L^3\cap L^1$ and $u_0$ small in $L^3$ satisfies
$$\|u(t)\|_2\leq Ct^{-\frac 54} \qquad \mbox{ and } \qquad \|\nabla u(t)\|_{\infty}\leq Ct^{-\frac 52}\,.$$
Further, it is easy to construct velocity fields which satisfy \eqref{assumption-u} and \eqref{gradient-condition2} or \eqref{gradient-condition3}, respectively, since $u$ does not need to obey any differential equation. 
For example, we can consider the modified two-dimensional shear flow $u=(e^{-\frac{x^2+y^2}{2}}(-y,x)(1+t)^{-\nu},0)$.

Finally, we turn to the study of the filamentation length $\lambda$.
Using again the interpolation inequality, this time written in the form 
 \begin{equation}\label{interpol}
 \frac{\|\nabla^{-1}\theta(t)\|_2}{\|\theta(t)\|_2}\geq \frac{\|\theta(t)\|_2}{\|\nabla\theta(t)\|_2}\,,
 \end{equation}
 together with the upper bound \eqref{upper-bound-grad-comp} and the lower bound \eqref{estimate1}, we find
 \begin{cor}\label{cor}
Under the assumption on $\theta_0$ and $u$ stated in Theorem \ref{th1} and Theorem \ref{th2}, there
exists a constant $C$ depending on $d, \rs, \|\theta_0\|_2, \|\nabla \theta_0\|_2$ and $\alpha$ such that
 \begin{equation}\label{lambda-lb-ad}
 \lambda(t)\geq C(1+t)^{\frac 12}f(t)^{-1}\,,
 \end{equation}
 where 
 $$f=\kappa^{-\frac d4-\frac{\rs}{2}+m+\frac 12}\times
 \begin{cases}
 e^{-\frac{[(1+t)^{-\nu+1}-1]}{-\nu+1}}\,, & \nu>1\\
 1\,, & \nu=1\\
 e^{-\frac{[(1+t)^{-\nu+1}-1]}{-\nu+1}}(1+t)^{-1}\,, &  0<\nu<1\,.
  \end{cases}$$
 \end{cor}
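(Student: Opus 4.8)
The plan is to read off \eqref{lambda-lb-ad} directly from the three ingredients already in hand: the interpolation inequality \eqref{interpol}, the lower bound \eqref{estimate1} on $\|\theta(t)\|_2$ from Theorem \ref{th1}, and the upper bound \eqref{upper-bound-grad-comp} on $\|\nabla\theta(t)\|_2$ from Lemma \ref{UBG}. The inequality \eqref{interpol} is just Cauchy--Schwarz in Fourier space: writing $|\hat\theta(\xi)|^2 = \bigl(|\xi|^{-1}|\hat\theta(\xi)|\bigr)\bigl(|\xi|\,|\hat\theta(\xi)|\bigr)$ and applying Hölder yields $\|\theta(t)\|_2^2 \leq \|\nabla^{-1}\theta(t)\|_2\,\|\nabla\theta(t)\|_2$, and since \eqref{estimate1} guarantees $\|\theta(t)\|_2>0$ in the relevant time range the division defining $\lambda(t)$ is legitimate and gives $\lambda(t)\geq \|\theta(t)\|_2/\|\nabla\theta(t)\|_2$. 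So it suffices to bound this last ratio from below.

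Inserting \eqref{estimate1} into the numerator and \eqref{upper-bound-grad-comp} into the denominator, one gets
$$\lambda(t)\ \gtrsim\ \frac{\kappa^{-\frac d4-\frac{\rs}{2}}\,(1+t)^{-\frac d4-\frac{\rs}{2}}}{\kappa^{-\frac12\max\{\frac d4+\frac{\rs}{2},\,m\}-1}\,(1+t)^{-\min\{\frac d4+\frac{\rs}{2}+\frac12,\,\frac d4+1\}}\,f(t)}\,.$$
The standing hypotheses resolve the $\max$ and the $\min$: because $-\frac d2<\rs<1$ and $d\in\{2,3\}$ one has $\frac d4+\frac{\rs}{2}<\frac d4+\frac12\leq \frac d2\leq d<d+2\leq m$, so $\max\{\frac d4+\frac{\rs}{2},m\}=m$; and $\rs<1$ forces $\frac d4+\frac{\rs}{2}+\frac12<\frac d4+1$, so $\min\{\frac d4+\frac{\rs}{2}+\frac12,\frac d4+1\}=\frac d4+\frac{\rs}{2}+\frac12$. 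The two powers of $(1+t)$ then combine to the residual exponent $-(\frac d4+\frac{\rs}{2})+(\frac d4+\frac{\rs}{2}+\frac12)=\frac12$, the powers of $\kappa$ collapse to the single exponent $-\frac d4-\frac{\rs}{2}+m+\frac12$ recorded in the statement, and the factor $f(t)$ in \eqref{upper-bound-grad-comp} passes to the denominator as $f(t)^{-1}$. Substituting the three explicit shapes of $f(t)$ (the cases $\nu>1$, $\nu=1$, $0<\nu<1$) then produces exactly the case distinction in the displayed formula for $f$, which proves \eqref{lambda-lb-ad}.

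I do not expect a genuine obstacle here — the argument is a one-line division — but the point that needs care is the \emph{mutual compatibility of the hypotheses}. One must check that the range $\rs\in(1-\frac d2,1)$ imposed in Theorem \ref{th2} sits inside $(-\frac d2,1)$ from Theorem \ref{th1}, that a rational $m\geq d+2$ can be chosen consistently with the smallness conditions \eqref{condition1} on $t$, and that a divergence-free $u$ can satisfy \eqref{assumption-u} together with one of \eqref{gradient-condition1}--\eqref{gradient-condition3} simultaneously; the last item is precisely what the examples following Theorem \ref{th2} are there to settle. Once this consistency is granted, no further work is required beyond the substitution above.
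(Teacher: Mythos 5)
Your proof is correct and is exactly the paper's argument: the corollary is obtained by inserting the lower bound of Theorem \ref{th1} and the gradient upper bound of Lemma \ref{UBG} into the interpolation inequality \eqref{interpol} and dividing. One small bookkeeping point: the $\kappa$-exponent $-\frac d4-\frac{\rs}{2}+m+\frac 12$ in the statement is what you get from the prefactor $\kappa^{-n-\frac 12}$ with $n=\max\{\frac d4+\frac{\rs}{2},m\}=m$ as written in Lemma \ref{UBG}, not from the introduction's summary \eqref{upper-bound-grad-comp} that you quote (whose prefactor $\kappa^{-\frac m2-1}$ would yield $-\frac d4-\frac{\rs}{2}+\frac m2+1$ instead); this is an inconsistency internal to the paper, and the time exponents --- hence the substance of the bound --- are unaffected.
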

 From this bound we deduce two different asymptotic behaviors: for $t\rightarrow \infty$ the function $(1+t)^{\frac 12}f(t)^{-1}$ goes to infinity for $\nu\geq 1$ while it goes to zero for $\nu\in [0,1)$, indicating \textit{dispersion} in the first case and \textit{mixing} in the second.

\noindent This result does not contradict the observation in \cite{DM2018}: 
In fact our argument seems to suggests that there is no analogous mechanism in the whole space, which enforces the decay of the solution and of its gradient at the same rate. 
Nevertheless, it would be interesting to transfer the approach of this paper to a configuration with bounded domain and periodic boundary conditions as described in \cite{DM2018}.

\bigskip
\subsection*{Notation:} 
In the following results we will denote with $C$ generic constants depending on the data of the problem (initial data, vector field and dimension).
We want to give a fair warning to the reader that in some equations this constant $C$ appears multiple times, but its value might change. Nevertheless, this abuse of notation is motivated by the fact that their exact value is not important for our purposes and we did not attempt to optimize them. Moreover with the symbols $\lesssim$, $\sim$ and $\gtrsim$ we denote the relations $\leq$, $=$ and $\geq$, respectively, hiding numerical constants which may depend on the dimension $d$ and that we do not want to track.

Furthermore $\|\cdot\|_2$ and $\|\cdot\|_{\infty}$ denote the standard $L^2$- and $L^{\infty}$-norms
\begin{itemize}
    \item $\|f\|_2=\left(\int_{\R^d}|f(x)|^2\, dx\right)^{\frac 12}$
    \item $\|f\|_{\infty}=\sup_{x\in \R^d}|f(x)|$\,,
\end{itemize}
and the Fourier transform of $f\in L^2$ is denoted by
$$\hat f(\xi,t)=\int_{\R^d}e^{-i\xi\cdot x}f(x,t)\, dx\,,$$
where $\xi\in \R^d$ is the Fourier-variable.
The $L^2$-norm of the inverse gradient of $f$ is defined as
 \begin{equation*}
  \|\nabla^{-1}f\|_2^2 =\int_{\R^d} (|\xi|^{-1} |\hat{f}(\xi)|)^2\, d\xi\,.
 \end{equation*} 

\subsection*{Organization of the paper:}

The second section is devoted to Theorem \ref{th1}. After stating all the main ingredients for
the result (Lemma \ref{LB-HE} and Lemma \ref{UB-diff}), Theorem \ref{th1} is proved in Subsection 2.1. The
Lemmas, together with the crucial Proposition \ref{pr1}, are subsequently proved in Subsection
2.2. For convenience of the reader, we summarize the steps of the longer proofs (for example
the one of Proposition \ref{pr1}) right at the beginning and verify the steps subsequently. In the
third section, we first state Lemma \ref{UBG}, which is the main tool for the proof of Theorem
\ref{th2}. 
The latter is proved in Subsection 3.1 and the lemma is demonstrated in Subsection 3.2.
Section 4 is devoted to discussion and conclusion.
Finally, in the appendix we compute bounds for the filamentation length for the pure advection equation in the whole space under the same assumptions as in Theorem \ref{th2}, but restricting ourselves to the case $\rs=0$, for simplicity.

\subsection*{Acknowledgement}
We would like to thank Charles Doering and Anna Mazzucato for introducing us to the ``Batchelor-scale problem'', for the various discussions, precious comments and suggestions.
The authors thank the anonymous referees for precious comments and literature suggestions that helped us improving the results in the manuscript. 
CN was partially funded by DFG-GrK2583 and DFG-TRR181.

\section{Theorem \ref{th1}}

We start by splitting the solution of \eqref{AD} into two parts:
Let $T=T(x,t)$ solve the heat equation in $\R^d$
\begin{equation}\label{HE}
\begin{cases}
 \partial_tT=\kappa\Delta T & \mbox{ in } \R^d\times(0,\infty)\,\\
 T(0,x)=\theta_0(x) & \mbox{ in } \R^d\,,
\end{cases}
\end{equation}
then, by subtraction, the function $\eta(x,t) := \theta(x,t)-T(x,t)$ satisfies
\begin{equation}\label{eq:Diff}
\begin{cases}
 \partial_t \eta = \kappa\Delta \eta - u \cdot \nabla \theta & \mbox{ in } \R^d\times(0,\infty)\,\\
 \eta(0,x) = 0 & \mbox{ in } \R^d \, .
\end{cases}
\end{equation}
Observing that $\| \theta(t) \|_2 \geq \| T(t) \|_2 - \| \eta(t) \|_2$, the proof of Theorem \ref{th1} is based on the combination of a lower bound for the solution of the heat equation and an upper bound for $\eta$.

Before stating the crucial lemmas let us give the definition of \textit{decay character} given in \cite{niche2015decay}
\begin{definition}\label{decay-char}
Let $\theta_0\in L^2(\R^d)$. 
The \textit{decay character} of $\theta_0$, denoted by $\rs=\rs(\theta_0)$ is the unique $r\in (-\frac d2,\infty)$ such that 
\begin{equation}\label{decay-indicator}
0<\lim_{\delta\rightarrow 0}\delta^{-2r-d}\int_{|\xi|\leq \delta}|\widehat{\theta}_0(\xi)|^2\, d\xi<\infty
\end{equation}
provided this number exists. 
More compactly, we can define
$$\rs=\rs(\theta_0)=\sup\left\{ r\in \left(-\frac d2,\infty\right) |\lim_{\delta\rightarrow 0}\delta^{-2r-d}\int_{|\xi|\leq \delta}|\hat{\theta}_0|\, d\xi=0\right\}\,.$$
\end{definition}

We restrict our considerations to solutions of \eqref{AD} with algebraic decay and therefore explicitly exclude the cases $r^\ast(\theta_0) = -\frac{d}{2}$ and $r^\ast(\theta_0) = \infty$.

\begin{remark}\label{initial-data}
We observe that $\rs(\theta_0)=\alpha$ for initial data such that $|\hat{\theta}_0(\xi)|\sim |\xi|^{\alpha}$ for $|\xi|\leq \delta$ and $\alpha> - \frac{d}{2}$.
In particular, this implies $\rs(\theta_0)=0$ if $c \leq |\hat\theta_0(\xi)| \leq C$ for $|\xi| \leq \delta$ and some constants $0 \leq c \leq C$.
Notice, however, that this class also contains mean-free initial data in real space, since $\hat{\theta}_0(\xi=0)=0$ for $\alpha > 0$.
Another example for which the limit \eqref{decay-indicator} exists was given in \cite[Example 2.6]{niche2015decay} (and in \cite[Example 2.6]{FNP16}, where it was slightly corrected): if $\theta_0\in L^p(\R^d)\cap L^2(\R^d)$ with $1\leq p\leq 2$, then $\rs(\theta_0)=-d(1-\frac 1p)$.
\end{remark}
\begin{remark}\label{re:BrandoleseRS}
The limit in \eqref{decay-indicator} might not exist.
Brandolese \cite{B16} explicitly constructs initial data $\theta_0\in L^2(\R^d)$ with very fast oscillations near the origin, for which the limit is not well defined.
The author then proceeds to relax the requirements for the existence of decay characters, giving a new (more general) definition, which also allow such initial data (excluded from the previous theory).
In the same paper, the initial data, for which decay characters in the new definition exist, is characterized in terms of subsets of Besov spaces.
With this, it is proved that the solution of $\partial_t u=\mathcal L u$ with initial data $u_0 \in L^2$, where $\mathcal L$ is a pseudo-differential operator with homogeneous symbol, satisfies
\begin{equation}\label{BB}
(1+t)^{-\frac{1}{\alpha}(\rs+\frac d2)}\lesssim\|u(t)\|_2\lesssim (1+t)^{-\frac{1}{\alpha}(\rs+\frac d2)}\,,
\end{equation}
(with $\alpha$ depending on the symbol of $\mathcal{L})$ if and only if $u_0\in L^2$ is such that the decay character $\rs(u_0)\in(-\frac d2, \infty)$ exists.
\end{remark}

\begin{lemma}[Lower bound for the solution of \eqref{HE}]\label{LB-HE}
Let $T$ solve \eqref{HE} and $\theta_0\in L^2(\R^d)$ have decay character $\rs(\theta_0)=\rs$.
 If $-\frac d2<\rs<\infty$
 then there exists a constant $C>0$ depending on $\kappa$ and $\rs$ such that
 \begin{equation}\label{lbh}
 \|T(t)\|_2\geq C (\kappa(1+t))^{-\frac d4-\frac{\rs}{2}}\,.
 \end{equation}
\end{lemma}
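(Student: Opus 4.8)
The plan is to pass to the Fourier side, where the heat semigroup is multiplication by a Gaussian, and then localize the frequency integral to a ball shrinking at the parabolic rate. By Plancherel and $\widehat{T}(\xi,t)=e^{-\kappa t|\xi|^2}\widehat{\theta}_0(\xi)$,
$$\|T(t)\|_2^2=(2\pi)^{-d}\int_{\R^d}e^{-2\kappa t|\xi|^2}|\widehat{\theta}_0(\xi)|^2\,d\xi\ \geq\ (2\pi)^{-d}\int_{|\xi|\le R(t)}e^{-2\kappa t|\xi|^2}|\widehat{\theta}_0(\xi)|^2\,d\xi\,,$$
and the whole point is to choose $R(t)$ so that the Gaussian is harmless on the ball while the remaining mass $\int_{|\xi|\le R(t)}|\widehat{\theta}_0|^2\,d\xi$ can be bounded below using the decay character.

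I would take $R(t):=(\kappa(1+t))^{-1/2}$. On $\{|\xi|\le R(t)\}$ one has $2\kappa t|\xi|^2\le 2t/(1+t)<2$, hence $e^{-2\kappa t|\xi|^2}>e^{-2}$, and therefore $\|T(t)\|_2^2\gtrsim\int_{|\xi|\le R(t)}|\widehat{\theta}_0(\xi)|^2\,d\xi$. From Definition \ref{decay-char}, since the limit $\lim_{\delta\to0}\delta^{-2\rs-d}\int_{|\xi|\le\delta}|\widehat{\theta}_0|^2\,d\xi$ exists and is strictly positive, there are constants $c_0>0$ and $\delta_0>0$ with
$$\int_{|\xi|\le\delta}|\widehat{\theta}_0(\xi)|^2\,d\xi\ \ge\ c_0\,\delta^{2\rs+d}\qquad\text{for all }0<\delta\le\delta_0\,.$$
Because $R(t)\to0$ as $t\to\infty$, there is $t_0$ (depending on $\kappa$ and $\delta_0$) with $R(t)\le\delta_0$ for $t\ge t_0$; for those $t$ this yields $\|T(t)\|_2^2\gtrsim c_0\,R(t)^{2\rs+d}=c_0\,(\kappa(1+t))^{-\rs-d/2}$, which is exactly the square of the claimed bound.

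It remains to treat the bounded range $0\le t\le t_0$, where one simply uses that $R(t)$ is non-increasing to write $\int_{|\xi|\le R(t)}|\widehat{\theta}_0|^2\,d\xi\ge\int_{|\xi|\le R(t_0)}|\widehat{\theta}_0|^2\,d\xi=:c_1$; this $c_1$ is strictly positive because $\rs<\infty$ forbids $|\widehat{\theta}_0|^2$ from vanishing a.e.\ near the origin (otherwise the defining limit would be $0$ for every $r$ and force $\rs=+\infty$). On $[0,t_0]$ the factor $(\kappa(1+t))^{-\rs-d/2}$ lies in a fixed compact range, so after enlarging the constant the inequality $\|T(t)\|_2^2\ge C^2(\kappa(1+t))^{-\rs-d/2}$ holds there as well; combining the two regimes and taking square roots gives \eqref{lbh}.

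The only delicate point is the interface with the definition of the decay character: one must check that the hypothesis $-d/2<\rs<\infty$ genuinely provides both the one-sided power law $\int_{|\xi|\le\delta}|\widehat{\theta}_0|^2\,d\xi\gtrsim\delta^{2\rs+d}$ on a definite range of $\delta$ and the non-degeneracy of the low-frequency mass used for small $t$. Once these are in place, the estimate is a short, standard Fourier-splitting-type computation, and the resulting constant depends on $d$, $\rs$, $\kappa$ (through $t_0$), and the low-frequency profile of $\theta_0$.
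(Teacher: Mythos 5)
Your proof is correct and follows essentially the same route as the paper: Plancherel, restriction of the frequency integral to a ball of radius $\sim(\kappa(1+t))^{-1/2}$ where the Gaussian factor is bounded below, and the lower bound $\int_{|\xi|\le\delta}|\widehat{\theta}_0|^2\,d\xi\gtrsim\delta^{2\rs+d}$ coming from the positivity of the limit in Definition~\ref{decay-char}. Your explicit treatment of the regime where $R(t)>\delta_0$ (small $t$, or $\kappa<1$) is a detail the paper glosses over by writing $\delta(t)=\delta_0(\kappa(1+t))^{-1/2}$, and it is handled correctly.
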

\begin{lemma}[Upper bound for the solution of \eqref{eq:Diff}]\label{UB-diff}
Consider $\eta=\theta-T$, solution of equation \eqref{eq:Diff}. 
Let $\theta_0\in L^2(\R^2)$ be the initial condition with decay character $\rs \in (-\frac d2, \infty)$ 
 and let $u(\cdot,t)\in L^2(\R^d)$ be a divergence-free vector field such that 
 \begin{equation*}\label{new-ass-u-diff}
 \|u(t)\|_2\sim (1+t)^{-\alpha}  \;\mbox{ for some } \alpha > \max\left\{\frac{1}{2} - \frac{d}{4}, 1-\frac d4-\frac{\rs}{2}\right\} \, .
 \end{equation*}
Then there exist a rational number $m\geq \frac d2+1$ and a constants $C>0$ depending on $d,\rs,\|\theta_0\|_2$ and $\alpha$, such that 
\begin{equation}\label{ubound-eta}
\begin{array}{lrll}
&\|\eta(t)\|_2^2&\leq C
\kappa^{-m-\frac d4-\frac 12}(1+t)^{-\min\{\frac d2+1,\frac d2+\frac{\rs}{2}+\alpha - \frac{1}{2}\}}& \rs \leq 1 
\\
&\|\eta(t)\|_2^2&\leq C
\kappa^{-m-\frac d4-\frac 12}(1+t)^{-\min\{\frac d2+1, \frac d2+\alpha\}} & \rs\geq 1 
\end{array}
\end{equation}
holds.
\end{lemma}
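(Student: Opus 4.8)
The plan is to apply the Fourier-splitting method directly to the energy identity for $\eta$, treating $-u\cdot\nabla\theta$ as a forcing term, and to bootstrap using the upper bound on $\|\theta(t)\|_2$ recorded in \eqref{upper-bound-comp}. First I would write the energy balance for \eqref{eq:Diff} in Fourier space: multiplying by $\bar{\hat\eta}$ and integrating,
\[
\frac{d}{dt}\|\eta(t)\|_2^2 = -2\kappa\int_{\R^d}|\xi|^2|\hat\eta|^2\,d\xi - 2\,\mathrm{Re}\int_{\R^d}\overline{\hat\eta}\,\widehat{(u\cdot\nabla\theta)}\,d\xi\,,
\]
and estimate the forcing term. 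Since $u$ is divergence-free, $u\cdot\nabla\theta=\div(u\theta)$, so $\widehat{(u\cdot\nabla\theta)}(\xi)=i\xi\cdot\widehat{(u\theta)}(\xi)$; pairing with $\hat\eta$ and using Young's and Hölder's inequalities in the form $\|\widehat{u\theta}\|_\infty\le\|u\theta\|_1\le\|u\|_2\|\theta\|_2$ gives a bound on the forcing by (a multiple of) $\kappa\int|\xi|^2|\hat\eta|^2 + \kappa^{-1}\|u\|_2^2\|\theta\|_2^2$ after a Cauchy--Schwarz split, absorbing the dissipation. This yields a differential inequality of the schematic form $\frac{d}{dt}\|\eta\|_2^2 \le -2\kappa\int|\xi|^2|\hat\eta|^2\,d\xi + C\kappa^{-1}\|u(t)\|_2^2\|\theta(t)\|_2^2$.

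Next I would run the Fourier-splitting step: fix a time-dependent radius $\rho(t)^2 = \frac{g(t)}{2\kappa(1+t)}$ with $g$ a slowly growing factor to be tuned (the standard choice $g$ constant or $g=\log$-type), split $\R^d = \{|\xi|\le\rho(t)\}\cup\{|\xi|>\rho(t)\}$ in the dissipation integral, bound $\int|\xi|^2|\hat\eta|^2 \ge \rho(t)^2(\|\eta\|_2^2 - \int_{|\xi|\le\rho}|\hat\eta|^2)$, and control the low-frequency part $\int_{|\xi|\le\rho}|\hat\eta|^2\,d\xi$. Here the key point is that $\hat\eta(\xi,t) = \int_0^t e^{-\kappa|\xi|^2(t-s)}\,\widehat{(u\cdot\nabla\theta)}(\xi,s)\,ds$ (Duhamel for \eqref{eq:Diff} with zero initial data), so $|\hat\eta(\xi,t)| \le |\xi|\int_0^t \|u(s)\|_2\|\theta(s)\|_2\,ds$; integrating $|\xi|^2$ over the ball of radius $\rho(t)$ produces a factor $\rho(t)^{d+2}$ times the square of that time integral. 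Feeding the a priori upper bound \eqref{upper-bound-comp} for $\|\theta(s)\|_2$ and the hypothesis $\|u(s)\|_2\sim(1+s)^{-\alpha}$ into $\int_0^t\|u\|_2\|\theta\|_2\,ds$, and noting the integrability condition on $\alpha$ in the hypothesis is exactly what makes the relevant time integrals converge or grow at the stated rate, one obtains an explicit decay bound for the low-frequency contribution. Assembling these pieces gives a closed first-order differential inequality
\[
\frac{d}{dt}\bigl((1+t)^{\beta}\|\eta\|_2^2\bigr) \le C\,\kappa^{-m-\frac d4-\frac12}(1+t)^{\beta-1-\min\{\frac d2+1,\,\cdots\}+\text{(correction)}}
\]
for a suitable exponent $\beta$, which integrates to \eqref{ubound-eta}; the two cases $\rs\le 1$ and $\rs\ge 1$ arise because the $\rs$-dependence in \eqref{upper-bound-comp} enters the time integral differently according to whether $\frac d4+\frac{\rs}{2}$ or $\frac d4+\frac12$ is the binding exponent, and the $\min$ in the conclusion records the crossover between the forcing-driven rate and the intrinsic parabolic rate $(1+t)^{-\frac d2-1}$.

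The main obstacle I anticipate is the bookkeeping of the $\kappa$-powers and the emergence of the rational exponent $m\ge \frac d2+1$: the Fourier-splitting radius, the Duhamel estimate, and the a priori bound \eqref{upper-bound-comp} each contribute negative powers of $\kappa$, and iterating the splitting argument (which is implicitly needed because \eqref{upper-bound-comp} itself is proved by a bootstrap) accumulates these into a power that is not a priori an integer — hence the hypothesis only asserts the existence of \emph{some} rational $m$ rather than pinning it down. A secondary technical point is justifying the energy identity and the Duhamel formula rigorously (density/approximation argument for weak solutions, and checking $u\theta\in L^1$ uniformly on compact time intervals so that $\widehat{u\theta}$ is bounded), but this is routine given $\theta_0\in L^2$, $u\in L^2$, and the parabolic smoothing; I would relegate it to a remark. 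The delicate part of the time-integral estimate is the borderline regime $\frac12-\frac d4 < \alpha \le 1-\frac d4-\frac{\rs}{2}$ versus $\alpha$ above $1-\frac d4-\frac{\rs}{2}$ — only in the latter does the forcing decay fast enough that the parabolic rate $(1+t)^{-\frac d2-1}$ can dominate, which is precisely why both the hypothesis on $\alpha$ and the $\min$ in \eqref{ubound-eta} take the form they do.
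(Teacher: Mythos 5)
Your low-frequency treatment (Duhamel for $\hat\eta$ with zero initial data, the bound $|\widehat{u\cdot\nabla\theta}|\le|\xi|\,\|u\|_2\|\theta\|_2$, and integration of $|\xi|^2$ over the splitting ball) matches the paper's argument and is fine. The genuine gap is in your energy inequality for the forcing term. You claim that the pairing $-2\,\mathrm{Re}\int\overline{\hat\eta}\;i\xi\cdot\widehat{u\theta}\,d\xi$ can be bounded by $\kappa\int|\xi|^2|\hat\eta|^2\,d\xi+\kappa^{-1}\|u\|_2^2\|\theta\|_2^2$ after a Cauchy--Schwarz split, but that split requires $\|\widehat{u\theta}\|_2^2=\|u\theta\|_2^2\le\|u\|_2^2\|\theta\|_2^2$, which is false: H\"older gives only $\|u\theta\|_1\le\|u\|_2\|\theta\|_2$, i.e.\ an $L^\infty_\xi$ bound on $\widehat{u\theta}$, and an $L^\infty_\xi$ bound is usable only when integrating over a set of finite measure (the ball $S(t)$), not in a global pairing against $\hat\eta$ over all of $\R^d$. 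With only $u(t)\in L^2$ and $\theta(t)\in L^2$ there is no control on $\|u\theta\|_2$, so your first differential inequality does not close.

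The paper avoids this entirely by exploiting the structure of $\eta=\theta-T$: testing \eqref{eq:Diff} with $\eta$, the contribution $\int(u\cdot\nabla\theta)\,\theta\,dx$ vanishes by incompressibility, and what survives is $-\int u\theta\cdot\nabla T\,dx\le\|\nabla T(t)\|_{\infty}\|u(t)\|_2\|\theta(t)\|_2$. The factor $\|\nabla T(t)\|_{\infty}\lesssim\|\theta_0\|_2\,\kappa^{-\frac d4-\frac12}(1+t)^{-\frac d4-\frac12}$ (estimate \eqref{gradient-he}, available because $T$ solves the explicit heat equation) supplies exactly the extra time decay that produces the exponent $\frac d2+\frac{\rs}{2}+\alpha-\frac12$ in \eqref{ubound-eta}. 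Note that even if your forcing estimate were valid, the term $\kappa^{-1}\|u\|_2^2\|\theta\|_2^2\sim(1+t)^{-2\alpha-\frac d2-\rs}$ would integrate to a rate different from the one claimed in the lemma, so your route would not reproduce \eqref{ubound-eta} even formally. Once you insert the cancellation and the heat-kernel gradient bound, the rest of your outline goes through essentially as in the paper.
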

The lower bound for $\theta=T+\eta$ now follows easily from Lemma \ref{LB-HE} and Lemma \ref{UB-diff}.
\subsection{Proof of Theorem \ref{th1}}
\begin{proof}[Proof of Theorem \ref{th1}]

Combining estimate \eqref{lbh} with \eqref{ubound-eta} we obtain: 
 \begin{itemize}
 \item 
 For $\rs< 1$, $\alpha\geq\frac 32-\frac{\rs}{2}$
 \begin{align*}
 \|\theta(t)\|_2^2 & \geq \|T(t)\|_2^2-\|\eta(t)\|_2^2 \\
  & \geq C \kappa^{-\frac d2-\rs} (1+t)^{-\frac{d}{2}-\rs}\left[1-\kappa^{-m-\frac 12+\frac d4+\rs}(1+t)^{\rs-1}\right]\,.
 \end{align*}
 Thus for $t>0$ sufficiently large such that $\kappa^{-m-\frac 12+\frac d4+\rs}(1+t)^{\rs-1}<1$ we have
 \begin{equation*}
 \|\theta(t)\|_2\geq C \kappa^{-\frac d2-\rs} (1+t)^{-\frac{d}{2}-\rs}\,.
 \end{equation*}
 \item For $\rs< 1$, $\frac{\rs}{2}+\frac 12<\alpha\leq\frac 32-\frac{\rs}{2}$
  \begin{align*}
  \|\theta(t)\|_2 & \geq \|T(t)\|_2-\|\eta(t)\|_2 \\
  & \geq C \kappa^{-\frac d2-\rs} (1+t)^{-\frac{d}{2}-\rs}\left[1-\kappa^{-m-\frac 12+\frac d4+\rs}(1+t)^{\frac{\rs}{2}-\alpha+\frac 12}\right]\,.
 \end{align*}  
 Thus for $t>0$ sufficiently large such that $\kappa^{-m-\frac 12+\frac d4+\rs}(1+t)^{\frac{\rs}{2}-\alpha+\frac 12}<1$ we have
 \begin{equation*}
 \|\theta(t)\|_2\geq C\kappa^{-\frac d2-\rs}  (1+t)^{-\frac{d}{2}-\rs}\,.
 \end{equation*}
\end{itemize}
 Instead in the regime $\rs\geq 1$ we do not get any lower bounds as (the upper bound on) the energy of $\eta$ decays to zero slower than diffusion (see \eqref{ubound-eta}) and therefore the difference cannot be positive.
 In fact, this regime seems to be penalized by the adopted perturbation argument.
\end{proof}
\begin{remark}\label{remarkth1}
 Observe that for the pure advection equation (setting $\kappa=0$ in \eqref{AD}) we have $\|\theta(t)\|_2=\|\theta_0\|_2$ for sufficiently regular $u$ (for example when $u$ is smooth or in the DiPerna-Lions class). This does not contradict our result: In fact, passing the limit $\kappa\rightarrow 0$ in our result, we see that the conditions of validity above are not satisfied for finite times. 
\end{remark}

\subsection{Proof of Lemmas}
The result in Lemma \ref{LB-HE} is already proved in \cite[Theorem 2.10]{niche2015decay}. For convenience of the reader we report its proof here.
\begin{proof}[Proof of Lemma \ref{LB-HE}]
Because of condition \eqref{decay-indicator}, there exists a $\delta_0>0$ and $C_1>0$ such that if $0<\delta\leq\delta_0$ (to be chosen later) we have
$$C_1\delta^{2r^\ast +d}< \int_{|\xi|\leq \delta}|\hat{\theta}_0(\xi)|^2\, d\xi $$
 By Plancherel's theorem and the assumptions on $\theta_0$, for some $\delta=\delta(t)$ we have 
\begin{eqnarray*}
 \int_{\R^d}|T(x)|^2\, dx
          &=&\int_{\R^d}|\hat T(\xi)|^2\, d\xi\\
          &=&\int_{\R^d}|\hat \theta_0(\xi)|^2e^{-2\kappa|\xi|^2 t}\, d\xi\\
          &\geq&\int_{|\xi|\leq \delta(t)}|\hat \theta_0(\xi)|^2e^{-2\kappa|\xi|^2 t}\, d\xi\\
          &\geq&e^{-2\kappa\delta^2 t}\int_{|\xi|\leq \delta(t)}|\hat \theta_0(\xi)|^2\, d\xi\\
           &>&e^{-2\kappa\delta^2 t}C_1\delta^{2r^\ast +d}\\
\end{eqnarray*}
Setting $\delta(t):=\delta_0(\kappa(1+t))^{-\frac 12}$, then $e^{-2\kappa\delta^2 t}= e^{-2\kappa\delta_0^2(\kappa(1+t))^{-1} t}\geq C>0 $ and
\begin{equation*}
 \|T(t)\|_2^2\geq C (\kappa(1+t))^{-r^\ast-\frac d2}\,.
\end{equation*}
\end{proof}
In order to prove Lemma \ref{UB-diff} we need the following
\begin{prop}[Upper bound for the solution to \eqref{AD}]\label{pr1}
 Let $d=2,3$, $\theta_0\in L^2(\R^d)$ be the initial condition with decay character $\rs \in(-\frac d2, \infty)$ 
 and let $u(\cdot,t)\in L^2(\R^d)$ be a divergence-free vector field such that 
 \begin{equation}\label{new-assumption-u}
 \|u(t)\|_2 \sim (1+t)^{-\alpha} \quad\mbox{for some} \quad \alpha > \frac{1}{2} - \frac{d}{4} .
 \end{equation}
 Then there exists a positive constant $C$ depending on $d,\rs\|\theta_0\|_2$ and $\alpha$ such that
  \begin{equation}\label{result:ub}
   \|\theta(t)\|_2\leq C \kappa^{-\max\{\frac d4+\frac{\rs}{2}, m\}}(1+t)^{ -\min\{\frac d4+\frac{\rs}{2}, \frac d4+\frac 12\} }
  \end{equation}
  for some $m \geq \frac{d}{2}+1$ depending on $\ceil{ \frac{1}{|\alpha|}}$.
\end{prop}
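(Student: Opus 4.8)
The plan is to run the Fourier-splitting scheme directly on the energy of $\theta$, to control the low frequencies by a Duhamel formula, and to close the resulting self-referential inequality by a bootstrap on $\|\theta(t)\|_2$. First I would record the energy balance: pairing \eqref{AD} with $\theta$ and using $\div u = 0$ to annihilate the advective term gives, in Fourier variables, $\frac{d}{dt}\|\theta(t)\|_2^2 = -2\kappa\int_{\R^d}|\xi|^2|\hat\theta(\xi,t)|^2\,d\xi \le 0$, and in particular the crude bound $\|\theta(t)\|_2 \le \|\theta_0\|_2$. For a time-dependent radius $\rho(t)$ to be fixed later I split the frequency integral as $\int|\xi|^2|\hat\theta|^2\,d\xi \ge \rho(t)^2\big(\|\theta(t)\|_2^2 - \int_{|\xi|\le\rho(t)}|\hat\theta(\xi,t)|^2\,d\xi\big)$, set $\rho(t)^2 = g\,\kappa^{-1}(1+t)^{-1}$ with $g>0$ a large constant, and multiply by the integrating factor $(1+t)^{2g}$ to obtain
$$\frac{d}{dt}\Big[(1+t)^{2g}\|\theta(t)\|_2^2\Big] \le 2g\,(1+t)^{2g-1}\int_{|\xi|\le\rho(t)}|\hat\theta(\xi,t)|^2\,d\xi .$$

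Next I would estimate the low-frequency mass. Writing $u\cdot\nabla\theta = \div(u\theta)$ and solving the Fourier-transformed equation by Duhamel gives $\hat\theta(\xi,t) = e^{-\kappa|\xi|^2 t}\hat\theta_0(\xi) - \int_0^t e^{-\kappa|\xi|^2(t-s)}\,i\xi\cdot\widehat{u\theta}(\xi,s)\,ds$; bounding the heat factors by $1$ and using $|\widehat{u\theta}(\xi,s)| \le \|u(s)\theta(s)\|_{L^1} \le \|u(s)\|_2\|\theta(s)\|_2$ yields the pointwise estimate $|\hat\theta(\xi,t)| \le |\hat\theta_0(\xi)| + |\xi|\int_0^t\|u(s)\|_2\|\theta(s)\|_2\,ds$. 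Squaring, integrating over $|\xi|\le\rho(t)$, using the decay-character bound $\int_{|\xi|\le\delta}|\hat\theta_0|^2\,d\xi \lesssim \delta^{2\rs+d}$ for small $\delta$ together with $\int_{|\xi|\le\delta}|\xi|^2\,d\xi \sim \delta^{d+2}$, and then substituting $\rho(t)^2 = g\kappa^{-1}(1+t)^{-1}$ and integrating the differential inequality (with $g$ chosen large enough that the exponents $2g-\rs-\frac d2$ and $2g-\frac{d+2}{2}$ are positive, so that the time integrals are controlled by their upper endpoints), I arrive at
$$\|\theta(t)\|_2^2 \lesssim \Big(\tfrac g\kappa\Big)^{\rs+\frac d2}(1+t)^{-\rs-\frac d2} + \Big(\tfrac g\kappa\Big)^{\frac{d+2}{2}}(1+t)^{-\frac{d+2}{2}}\Big(\int_0^t\|u(s)\|_2\|\theta(s)\|_2\,ds\Big)^2 + (1+t)^{-2g}.$$

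The bootstrap then drives down the integral term, and this is the heart of the argument. Inserting the crude bound $\|\theta(s)\|_2 \le \|\theta_0\|_2$ together with $\|u(s)\|_2 \sim (1+s)^{-\alpha}$ gives $\int_0^t\|u\|_2\|\theta\|_2\,ds \lesssim (1+t)^{\max\{1-\alpha,0\}}$ (with a logarithm at $\alpha = 1$), hence a first improved decay $\|\theta(t)\|_2^2 \lesssim (1+t)^{-\gamma_1}$ with $\gamma_1 = \min\{\rs+\frac d2,\,\frac d2-1+2\alpha\}$, the positivity of the second entry being exactly the assumption $\alpha > \frac12 - \frac d4$. Re-inserting $\|\theta(s)\|_2 \lesssim (1+s)^{-\gamma_n/2}$ and iterating, each round replaces $\gamma_n$ by $\min\{\rs+\frac d2,\,\frac d2-1+2\alpha+\gamma_n\}$ as long as $\gamma_n < 2(1-\alpha)$, i.e.\ the rate increases by the fixed positive amount $2(\alpha+\frac d4-\frac12)$ per round, until the time integral $\int_0^t\|u\|_2\|\theta\|_2\,ds$ becomes bounded; at that stage $\|\theta(t)\|_2^2 \lesssim (1+t)^{-\min\{\rs+\frac d2,\,\frac{d+2}{2}\}}$, which after a square root is precisely the exponent $-\min\{\frac d4+\frac{\rs}{2},\,\frac d4+\frac12\}$ in \eqref{result:ub}. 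The number of rounds is finite and, as recorded in the statement, can be taken to depend only on $\lceil 1/|\alpha|\rceil$; each round costs a fixed power of $\kappa^{-1}$ coming from the weight $\rho(t)^{d+2}$, so the total accumulated power is a rational $m \ge \frac d2+1$, while all the $g$-, $\|\theta_0\|_2$- and $(d,\rs,\alpha)$-dependent factors are absorbed into $C$; comparing $m$ with the heat-part power $\rs+\frac d2$ produces the $\max\{\frac d4+\frac{\rs}{2},\,m\}$ in \eqref{result:ub}.

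The main obstacle is precisely this iteration: showing that the decay rate genuinely improves at every round, that the process terminates after finitely many rounds (this is where $\alpha > \frac12 - \frac d4$ is indispensable and where the count, hence $m$, is pinned down), and keeping honest track of the accumulating powers of $\kappa$ and of the auxiliary constant $g$ so as to land on an explicit rational $m \ge \frac d2+1$. The remaining points are routine bookkeeping: the borderline logarithmic cases ($\alpha = 1$, or a round in which the time integral is critical), and the short-time regime in which $\rho(t)$ is not small, so that the decay-character estimate must be replaced by $\int_{|\xi|\le\rho}|\hat\theta_0|^2\,d\xi \le \|\theta_0\|_2^2$; these affect only constants and the explicit power of $\kappa$.
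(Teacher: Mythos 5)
Your proposal is correct and follows essentially the same route as the paper: Fourier splitting with radius $\sim(\kappa(1+t))^{-1/2}$, the Duhamel representation with the key pointwise bound $|\widehat{u\cdot\nabla\theta}|\le|\xi|\,\|u\|_2\|\theta\|_2$, the decay-character estimate for the low-frequency part of $\hat\theta_0$, and a finite bootstrap that improves the decay rate by $2\alpha+\tfrac d2-1>0$ per round until the cap $\min\{\rs+\tfrac d2,\tfrac d2+1\}$ is reached. The only (harmless) deviations are that you keep $\bigl(\int_0^t\|u\|_2\|\theta\|_2\,ds\bigr)^2$ instead of applying Cauchy--Schwarz in time, and you seed the iteration with $\|\theta\|_2\le\|\theta_0\|_2$ rather than the paper's Gronwall step.
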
 
    \begin{remark}
    Notice that, combining this upper bound with the lower bound in Theorem \ref{th1}, we find that, for $\rs < 1$ and $\alpha>\frac{\rs}{2}+\frac 12$, our result is sharp, i.e.
    $$\|\theta(t)\|_{2}\sim (\kappa(1+t))^{-\frac{d}{4}-\frac{\rs}{2}}\,.$$
    \end{remark}
\begin{proof}
We divide the proof of Proposition \ref{pr1} in four steps which, for convenience of the reader, we state first and verify afterwards.
\begin{description}
 \item[Step 1] Define the set 
 \begin{equation}\label{def:S}
 S(t)=\left\{\xi\in \R^d\; |\quad  |\xi|\leq \left(\frac{\beta}{2\kappa(1+t)}\right)^{\frac 12}\right\}\,.
 \end{equation}
 Passing through the energy identity for equation \eqref{AD}, we have 
  \begin{equation}\label{eq:ode2}
\frac{d}{dt}((1+t)^{\beta}\|\theta(t)\|_2^2)\leq \beta(1+t)^{\beta-1}\int_{S(t)}|\hat{\theta}(\xi,t)|^2\, d\xi\,.
\end{equation}
\item[Step 2]
 Under assumption \eqref{new-assumption-u} for $u$ and using the bound for the heat kernel
  \begin{equation}\label{first-part}
   \int_{S(t)} e^{-2\kappa|\xi|^2t}|\hat\theta_0(\xi)|^2\, d\xi\leq C  (\kappa(1+t))^{-\frac d2-\rs}\,,
  \end{equation}
  (the proof of this estimate can be found in \cite[Theorem 2.10]{niche2015decay})
 %
 we obtain
  \begin{multline}\label{eq:ode3}
   \int_{S(t)}|\hat\theta(\xi,t)|^2\,d\xi
  \leq 2C\kappa^{-\frac d2-\rs}(1+t)^{-\frac d2-\rs}\\
  +\frac{2}{2+d}\left(\frac{\beta}{2\kappa(1+t)}\right)^{\frac{2+d}{2}}\,t\int_0^t \|\theta(t)\|_2^2\|u(t)\|_2^2\, ds. 
  \end{multline}
  In particular, the combination with \eqref{eq:ode2} yields
  \begin{multline}\label{eq:ode4}
    \frac{d}{dt}((1+t)^{\beta}\|\theta(t)\|_2^2)\leq 2C\kappa^{-\frac d2-\rs}(1+t)^{-\frac d2-\rs+\beta-1}\\
    +\frac{2}{2+d}\beta(1+t)^{\beta-1}\left(\frac{\beta}{2\kappa(1+t)}\right)^{\frac{2+d}{2}}\,t\int_0^t \|\theta(t)\|_2^2\|u(t)\|_2^2\, ds\,.
\end{multline}

  \item[Step 3]
   Estimating the second term on the right-hand side of \eqref{eq:ode4} we have the upper bounds 
   \begin{equation*}
   \|\theta\|_2^2\leq C \kappa^{-\max\{\frac d2+\rs, \frac d2+1\}}(1+t)^{ -\min\{\frac d2+\rs, \frac d2\} }\,
   \end{equation*}
   where the constant $C$ depends on $d, \rs$ and $\|\theta_0\|_2$.

 \item[Step 4] 
By iterating over the effect of \eqref{new-assumption-u} we obtain
\begin{equation*}
\|\theta\|_2^2\leq C \kappa^{-\max\{\frac d2+\rs, 2m\}}(1+t)^{ -\min\{\frac d2+\rs, \frac d2+1\} }\,
\end{equation*}
where $m>0$ depends on the number of iteration needed, proportional to $\ceil{\frac{1}{|\alpha|}}$. 
\end{description}
\textbf{Proof of Step 1: } 
 We start by testing equation \eqref{AD} with $\theta$, integrating by parts and, using the incompressibility condition, obtaining
\begin{equation*}
 \frac{d}{dt}\|\theta(t)\|_2^2=-2\kappa \|\nabla \theta(t)\|_2^2\,,
\end{equation*}
 which we can rewrite in Fourier space using Plancherel's theorem 
\begin{equation}\label{fs-rep}
\frac{d}{dt}\|\hat{\theta}(t)\|_2^2=-2\kappa \|\xi\,\hat{\theta}(t)\|_2^2\,.
\end{equation}
Consider the set
\begin{equation}\label{eq:S}
S(t):=\{\xi\in  \R^d\; |\;  |\xi|\leq R(t)\}\,,
\end{equation}
where $R(t)$ will be specified later, and split the integral on the right-hand-side of \eqref{fs-rep} over $S(t)$ and its complement $S^c(t)$. Using the positivity of the integrands and the definition of the set $S(t)$ we have
\begin{eqnarray*}
    \frac{d}{dt}\|\hat{\theta}(t)\|_2^2                               &=&-2\kappa\int_{S(t)}|\xi|^2|\hat{\theta}(\xi,t)|^2\, d\xi-2\kappa\int_{S^c(t)}|\xi|^2|\hat{\theta}(\xi,t)|^2\, d\xi\\
                                   &\leq&-2\kappa\int_{S^c(t)}|\xi|^2|\hat{\theta}(\xi,t)|^2\, d\xi\\
                                   &\leq&-2\kappa R^2(t)\int_{S^c(t)}|\hat{\theta}(\xi,t)|^2\, d\xi\\
                                   &= &-2\kappa R^2(t)\int_{\R^d}|\hat{\theta}(\xi,t)|^2\, d\xi+2\kappa R^2(t)\int_{S(t)}|\hat{\theta}(\xi,t)|^2\, d\xi\,.
\end{eqnarray*}
Choose 
\begin{equation}\label{eq:r}
R^2(t)=\frac{\phi '(t)}{2\kappa\phi(t)}\qquad \mbox{ with } \phi:\R^+\rightarrow \R \mbox{ increasing} 
\end{equation}
so that we can rewrite the above estimate as
\begin{equation}\label{eq:ode1}
\frac{d}{dt}(\phi(t)\|\theta(t)\|_2^2)\leq \phi'(t)\int_{S(t)}|\hat{\theta}(\xi,t)|^2\, d\xi\,.
\end{equation}
Defining
$$\phi(t)= (1+t)^{\beta}$$ with $\beta>0$ (to be chosen at the end), we obtain \eqref{eq:ode2}. Moreover the expression of $R$ can now be determined explicitly from \eqref{eq:r}: 
\begin{equation}\label{eq:r-explicit}
	R^2(t)=\frac{\phi'(t)}{2\kappa\phi(t)}=\frac{1}{2\kappa}\frac{d}{dt}\log \phi(t)=\frac{1}{2\kappa}\beta\frac{d}{dt} \ln(1+t)=\frac{\beta}{2\kappa(1+t)}\,.
\end{equation}
%
 \noindent\textbf{Proof of Step 2}:
 Write equation \eqref{AD} in Fourier space 
\begin{equation*}
 \partial_t\hat\theta(\xi)+\kappa|\xi|^2\hat\theta(\xi)=-\widehat{u\cdot \nabla\theta}(\xi)\,,
\end{equation*}
and the representation formula for its solution
\begin{equation*}
 \hat \theta(\xi,t)=e^{-\kappa|\xi|^2t}\hat\theta_0(\xi)+\int_0^te^{-\kappa |\xi|^2(t-s)}(-\widehat{u\cdot \nabla \theta})(\xi,s)\, ds\,.
\end{equation*}
Squaring, applying the Young Inequality $ab\leq\frac 12 a^2+\frac 12 b^2$ 
and integrating over $S(t)$, we obtain 
\begin{multline}\label{tbe}
\int_{S(t)}|\hat\theta(\xi,t)|^2\,d\xi
\leq 2\int_{S(t)}|e^{-\kappa |\xi|^2t}\hat\theta_0|^2\, d\xi
+2\int_{S(t)}\left(\int_0^t e^{-\kappa|\xi|^2 (t-s)}|\widehat{u\cdot \nabla \theta}|\, ds\right)^2\,d\xi. 
\end{multline}
Next, we estimate the right-hand side of \eqref{tbe}: for the first term we apply the heat-kernel estimate \eqref{first-part}.
 The claim in Step 2 is achieved by estimating the product $\widehat{u\cdot \nabla \theta}$ using the definition of Fourier transform and the assumptions on $u$ and $\theta$:
 \begin{align}\label{pivot}
  \begin{split}
   |\widehat{u\cdot\nabla \theta}|
   &=\left|\int u(x,t)\cdot\nabla \theta(x,t) e^{-i\xi\cdot x }\, dx\right|\\
   &=\left|\int \nabla\cdot(u(x,t) \theta(x,t)) e^{-i\xi\cdot x }\, dx\right|\\
   &=\left|\int u(x,t) \theta(x,t) i\xi e^{-i\xi\cdot x }\, dx\right|\\
   &\leq|\xi|\|\theta(t)\|_2\|u(t)\|_2\,.\\
  \end{split}
  \end{align}
  So, we have
  \begin{multline*}
\int_{S(t)}|\hat\theta(\xi,t)|^2\,d\xi
\leq 2\int_{S(t)}|e^{-\kappa |\xi|^2t}\hat\theta_0|^2\, d\xi
+2\int_{S(t)}|\xi|^2\, d\xi\,t\int_0^t \|\theta(t)\|_2^2\|u(t)\|_2^2\, ds. 
\end{multline*}
where we used $|e^{-\kappa|\xi|^2(t-s)}|\leq 1$ and the Cauchy-Schwarz inequality.
Passing to polar coordinates we compute the integral
\begin{equation*}
\int_{S(t)}|\xi|^2\, d\xi\sim \frac{1}{2+d}  R(t)^{2+d}= \frac{1}{2+d}\left(\frac{\beta}{2\kappa(1+t)}\right)^{\frac{2+d}{2}}\,.
\end{equation*}
Hence we obtain
\begin{multline*}
\int_{S(t)}|\hat\theta(\xi,t)|^2\,d\xi
\leq 2C\kappa^{-\frac d2-\rs}(1+t)^{-\frac d2-\rs}\\
+\frac{2}{2+d}\left(\frac{\beta}{2\kappa(1+t)}\right)^{\frac{2+d}{2}}\,t\int_0^t \|\theta(t)\|_2^2\|u(t)\|_2^2\, ds. 
\end{multline*}


 \noindent \textbf{Proof of Step 3}:
  Integrating \eqref{eq:ode4} between $0$ and $t$ 
  \begin{multline*}
    (1+t)^{\beta}\|\theta(t)\|_2^2\leq \|\theta_0\|_2^2 +\frac{2C}{(-\frac d2-\rs+\beta)}\kappa^{-\frac d2-\rs}(1+t)^{-\frac d2-\rs+\beta}\\
    +\frac{2}{2+d}\beta\left(\frac{\beta}{2}\right)^{\frac{2+d}{2}}\frac{1}{(-\frac d2+\beta)}\kappa^{-\frac d2-1}(1+t)^{-\frac d2+\beta}\int_0^t \|\theta(t)\|_2^2\|u(t)\|_2^2\, ds
\end{multline*}
and then, dividing by $(1+t)^{-\frac d2+\beta}$, we get
  \begin{multline*}
    (1+t)^{\frac d2}\|\theta(t)\|_2^2\leq \|\theta_0\|_2^2(1+t)^{\frac d2-\beta} +\frac{C_1}{(-\frac d2-\rs+\beta)}\kappa^{-\frac d2-\rs}(1+t)^{-\rs}\\
    +C_2\frac{1}{(-\frac d2+\beta)}\kappa^{-\frac d2-1}\int_0^t (1+t)^{\frac d2}\|\theta(t)\|_2^2(1+t)^{-\frac d2}\|u(t)\|_2^2\, ds \,,
\end{multline*}
where we smuggled in the weight $(1+t)^{\frac d2}$ in the integral on the right-hand side and where we set
$C_1=2C$ and 
$C_2=\frac{2}{2+d}\beta\left(\frac{\beta}{2}\right)^{\frac{2+d}{2}}$.

Now set 
\begin{eqnarray*}
X(t)&=&(1+t)^{\frac d2}\|\theta\|_2^2\\
a(t)&=&\frac{C_2}{(-\frac d2+\beta)}\kappa^{-\frac d2-1}(1+t)^{-\frac d2}\|u(t)\|_2^2 \\
b(t)&=& \|\theta_0\|_2^2(1+t)^{\frac d2-\beta} +\frac{C_1}{(-\frac d2-\rs+\beta)}\kappa^{-\frac d2-\rs}(1+t)^{-\rs} \,
\end{eqnarray*}
so that the previous bound can be written in the compact form 
$$X(t)\leq b(t)+\int_0^t a(s) X(s)\, ds\,.$$
We need to distinguish two cases, depending on whether $b(t)$ is an increasing or decreasing function of time.
\begin{enumerate}

\item If $\rs\leq 0$ then
$$X(t)\leq b(t)\exp\left(\int_0^t a(\tau)\, d\tau\right)\,,$$
that is 
\begin{multline*}
(1+t)^{\frac d2}\|\theta\|_2^2 \leq \left(\|\theta_0\|_2^2(1+t)^{\frac d2-\beta}+\frac{C_1}{(-\frac d2-\rs+\beta)}\kappa^{-\frac d2-\rs}(1+t)^{-\rs}\right) \times \\
\times \exp\left(\int_0^t \frac{C_2}{(-\frac d2+\beta)}\kappa^{-\frac d2-1}(1+\tau)^{-\frac d2}\|u(\tau)\|_2^2\, d\tau\right)\,.
\end{multline*}
According to our assumption \eqref{new-assumption-u}, $\int_0^{\infty}(1+\tau)^{-\frac d2}\|u(\tau)\|_2^2\, d\tau<\infty$, and we can estimate 
\begin{align*}
(1+t)^{\frac d2}\|\theta\|_2^2
\leq C \left(\|\theta_0\|_2^2(1+t)^{\frac d2-\beta}+\frac{C_1}{(-\frac d2-\rs+\beta)}\kappa^{-\frac d2-\rs}(1+t)^{-\rs}\right)\,.
\end{align*}
Therefore 
\begin{align*}
\|\theta\|_2^2
\leq C \left(\|\theta_0\|_2^2(1+t)^{-\beta}+\frac{C_1}{(-\frac d2-\rs+\beta)}\kappa^{-\frac d2-\rs}(1+t)^{-\frac d2-\rs}\right)\,
\end{align*}
and choosing $\beta>\frac d2+\rs$ we have 
\begin{align*}
\|\theta\|_2^2
\leq C\kappa^{-\frac d2-\rs}(1+t)^{-\frac d2-\rs}\,.
\end{align*}
where $C$ depends on $d,\rs,\|\theta_0\|_2$. 
\item If $\rs \geq 0$ then 
 $$X(t)\leq b(t)+\int_0^t\,b(s)\,a(s)\exp\left(\int_s^t a(\tau)\, d\tau\right)\, ds\,,$$
 that is 
\begin{align*}
 (1+t)^{\frac d2}\|\theta\|_2^2 \leq&
 \|\theta_0\|_2^2(1+t)^{\frac d2-\beta}+\frac{C_1}{(-\frac d2-\rs+\beta)}\kappa^{-\frac d2-\rs}(1+t)^{-\rs}\\
 &+\int_0^t \left[\left(\|\theta_0\|_2^2(1+s)^{\frac d2-\beta} +\frac{C_1}{(-\frac d2-\rs+\beta)}\kappa^{-\frac d2-\rs}(1+s)^{-\rs}\right)\right.\\
 & \quad \times \left.\frac{C_2}{(-\frac d2+\beta)}\kappa^{-\frac d2-1}(1+s)^{-\frac d2}\|u(s)\|_2^2\right.\,\\
 & \quad \times \left.\exp\left(\int_s^t \frac{C_2}{(-\frac d2+\beta)}\kappa^{-\frac d2-1}(1+\tau)^{-\frac d2}\|u(\tau)\|_2^2\, d\tau\right)\right]\, ds\,.  
 \end{align*}
 Dividing by $(1+t)^{\frac d2}$
 \begin{align*}
 \|\theta\|_2^2\leq&
 \|\theta_0\|_2^2(1+t)^{-\beta}+\frac{C_1}{(-\frac d2-\rs+\beta)}\kappa^{-\frac d2-\rs}(1+t)^{-\frac d2-\rs}\\
 &+(1+t)^{-\frac d2}\int_0^t \left[\left(\|\theta_0\|_2^2(1+s)^{\frac d2-\beta} +\frac{C_1}{(-\frac d2-\rs+\beta)}\kappa^{-\frac d2-\rs}(1+s)^{-\rs}\right)\right.\\
 & \quad \times \left.\frac{C_2}{(-\frac d2+\beta)}\kappa^{-\frac d2-1}(1+s)^{-\frac d2}\|u(s)\|_2^2\right.\,\\
 & \quad \times \left.\exp\left(\int_s^t \frac{C_2}{(-\frac d2+\beta)}\kappa^{-\frac d2-1}(1+\tau)^{-\frac d2}\|u(\tau)\|_2^2\, d\tau\right)\right]\, ds\,.  
 \end{align*}
Notice that $(1+\tau)^{-\frac d2}\|u(\tau)\|_2^2$ is integrable between $s$ and $\infty$ if
 \begin{equation}\label{cond-velocity}
 (1+\tau)^{-\frac d2}\|u(\tau)\|_2^2\leq c(1+\tau)^{-1-\varepsilon}\,,
 \end{equation}
 i.e. $\| u(\tau) \|_2 \leq c (1 + \tau)^{-\alpha}$ with $\alpha > \frac{1}{2} - \frac{d}{4}$, 
 so we can write 
 \begin{align*}
 \|\theta\|_2^2\leq&
 \|\theta_0\|_2^2(1+t)^{-\beta}+\frac{C_1}{(-\frac d2-\rs+\beta)}\kappa^{-\frac d2-\rs}(1+t)^{-\frac d2-\rs}\\
 &+C_0(1+t)^{-\frac d2}\int_0^t \left[\left(\|\theta_0\|_2^2(1+s)^{\frac d2-\beta} +\frac{C_1}{(-\frac d2-\rs+\beta)}\kappa^{-\frac d2-\rs}(1+s)^{-\rs}\right)\right.\\
 & \quad \times \left.\frac{cC_2}{(-\frac d2+\beta)}\kappa^{-\frac d2-1}(1+s)^{-1-\varepsilon}\right]\,ds
 \end{align*}
 where we used that for all $t\geq 0$ there exists a positive constant $C_0$
 $$\exp\left(\int_s^{\infty} \frac{cC_2}{(-\frac d2+\beta)}\kappa^{-\frac d2-1}(1+\tau)^{-1-\varepsilon}\, d\tau\right)\leq C_0\,.$$
 Integrating the right hand-side of 
 {\small\begin{align*}
 \|\theta\|_2^2\leq &
 \|\theta_0\|_2^2(1+t)^{-\beta}+\frac{C_1}{(-\frac d2-\rs+\beta)}\kappa^{-\frac d2-\rs}(1+t)^{-\frac d2-\rs}\\
 & + C_0(1+t)^{-\frac d2}\left[\|\theta_0\|_2^2\frac{cC_2}{(-\frac d2+\beta)}\kappa^{-\frac d2-1}\int_0^t(1+s)^{\frac d2-\beta-1-\varepsilon}\, ds\right.\\
 & + \left.\frac{cC_1C_2}{(-\frac d2-\rs+\beta)(-\frac d2+\beta)}\kappa^{-d-\rs-1}\int_0^t\,(1+s)^{-\rs-1-\varepsilon} ds\right]
 \end{align*}}
 in time, we obtain
 {\small\begin{align*}
 \|\theta\|_2^2\leq &
 \|\theta_0\|_2^2(1+t)^{-\beta}+\frac{C_1}{(-\frac d2-\rs+\beta)}\kappa^{-\frac d2-\rs}(1+t)^{-\frac d2-\rs}\\
 &+C_0(1+t)^{-\frac d2}\left\{\|\theta_0\|_2^2\frac{cC_2}{(-\frac d2+\beta)(\frac d2-\beta-\varepsilon)}\kappa^{-\frac d2-1}[(1+t)^{\frac d2-\beta-\varepsilon}-1]\right.\\
 &+\left.\frac{cC_1C_2}{(-\frac d2-\rs+\beta)(-\frac d2+\beta)(-\rs-\varepsilon)}\kappa^{-d-\rs-1}[(1+t)^{-\rs-\varepsilon}-1] \right\}
 \end{align*}}
 We choose $\beta>\frac d2+\rs$ and, since $\rs\geq 0$, we estimate
 \small\begin{multline*}
 \|\theta_0\|_2^2\frac{cC_2}{(-\frac d2+\beta)(\frac d2-\beta-\varepsilon)}
 [(1+t)^{\frac d2-\beta-\varepsilon}-1]\\
 +\frac{cC_1C_2}{(-\frac d2-\rs+\beta)(-\frac d2+\beta)(-\rs-\varepsilon)}
 [(1+t)^{-\rs-\varepsilon}-1] \leq C_3\,,
 \end{multline*}
 so that
 \begin{align*}
 \|\theta\|_2^2 \leq &
 \|\theta_0\|_2^2(1+t)^{-\beta}+\frac{C_1}{(-\frac d2-\rs+\beta)}\kappa^{-\frac d2-\rs}(1+t)^{-\frac d2-\rs}+C_4\kappa^{-\frac d2-1}(1+t)^{-\frac d2}\\
 \leq & C \kappa^{-\frac d2-1}(1+t)^{-\frac d2}\,.
 \end{align*}
 where $C$ depends on $\rs,d$ and $\|\theta_0\|_2$.
\end{enumerate}
\noindent \textbf{Proof of Step 4}:
We look at the region $\rs\geq 0$ and improve the result by iteration. From the previous step we have
$$\|u(t)\|_2\sim (1+t)^{-\alpha} \mbox{ with } \alpha > \frac{1}{2} - \frac{d}{4} \quad \mbox{ and } \quad \|\theta(t)\|_2\leq C \kappa^{-\frac d4-\frac 12}(1+t)^{-\frac d4}\,.$$
Starting again from \eqref{pivot} we have
\begin{align*}
  \begin{split}
   |\widehat{u\cdot\nabla \theta}|
   &\leq|\xi|\|\theta(t)\|_2\|u(t)\|_2\,\\
   &\leq C\kappa^{-\frac d4-\frac 12}|\xi|(1+t)^{-\frac d4- \alpha }\,.
  \end{split}
  \end{align*}
Then, using $|e^{-\kappa |\xi|^2(t-s)}|\leq 1$ we have
  \begin{equation}\label{here1}
   \int_0^t e^{-\kappa |\xi|^2(t-s)} C \kappa^{-\frac d4-\frac 12}|\xi|(1+s)^{-\frac d4- \alpha}\, ds\leq  \frac{C}{(-\frac d4- \alpha + 1)}\kappa^{-\frac d4-\frac 12} |\xi|(1+t)^{-\frac d4- \alpha + 1}
  \end{equation}
  for $\alpha < 1 - \frac{d}{4}$. 
 Integrate over $S(t)$:
  \begin{multline*}
   \int_{S(t)}2\left( \int_0^t e^{-\kappa |\xi|^2 (t-s)} |\widehat{u\cdot\nabla \theta}|\,ds\right)^2\, d\xi 
   \leq \int_{S(t)}|\xi|^2\, d\xi\, \frac{C^2}{(-\frac d4- \alpha + 1)^2}\kappa^{-\frac d2-1} (1+t)^{2-\frac{d}{2} - 2 \alpha}
  \end{multline*}
  Recalling the computation in Step 2
  \begin{equation}\label{triangle}
  \int_{S(t)}|\xi|^2\, d\xi\sim \frac{1}{2+d}\left(\frac{\beta}{2\kappa (1+t)}\right)^{\frac{2+d}{2}}\,.
  \end{equation}
 and inserting it in the previous bound, we obtain
  \begin{multline*}
   \int_{S(t)}2\left( \int_0^t e^{-\kappa |\xi|^2 (t-s)} |\widehat{u\cdot\nabla \theta}|\,ds\right)^2\, d\xi \\
   \leq \frac{1}{2+d}\left(\frac{\beta}{2}\right)^{\frac{2+d}{2}} \frac{C^2}{(-\frac d4- \alpha + 1)^2}\kappa^{-d-2}  (1+t)^{1 - d - 2 \alpha}
  \end{multline*}
As a result, inserting this estimate in the second term of the right-hand side of \eqref{tbe} we find
\begin{align*}
\int_{S(t)}|\hat\theta(\xi,t)|^2\,d\xi
&\leq 2\int_{S(t)}|e^{-\kappa |\xi|^2t}\hat\theta_0|^2\, d\xi\\
& \quad + 2\int_{S(t)}\left(\int_0^t e^{-\kappa|\xi|^2 (t-s)}|\widehat{u\cdot \nabla \theta}|\, ds\right)^2\,d\xi\\
&\leq 2C\kappa^{-\frac d2-\rs}(1+t)^{-\frac d2-\rs}\\
& \quad +\frac{1}{2+d}\left(\frac{\beta}{2}\right)^{\frac{2+d}{2}} \frac{A^2}{(-\frac d4- \alpha + 1)^2}\kappa^{-d-2}  (1+t)^{1 - d - 2 \alpha}\,.
\end{align*}
\footnote{We warn the reader that the two constants $C$ appearing on the right-hand side of the bound are not the same. This abuse of notation is justified at the end of the introduction. }
Finally, inserting in \eqref{eq:ode2} we obtain 
\begin{align*}
\frac{d}{dt}((1+t)^{\beta}\|\theta(t)\|_2^2) &
\leq \beta(1+t)^{\beta-1}\int_{S(t)}|\hat{\theta}(\xi,t)|^2\, d\xi\\
&\leq\beta2C\kappa^{-\frac d2-\rs}(1+t)^{-\frac d2-\rs+\beta-1}\\
& \quad +\beta\frac{1}{2+d}\left(\frac{\beta}{2}\right)^{\frac{2+d}{2}} \frac{C^2}{(-\frac d4- \alpha + 1)^2}\kappa^{-d-2}  (1+t)^{\beta - d - 2 \alpha }\,.
\end{align*}
Then, choosing $\beta>\max\{\frac d2+\rs, d + 2 \alpha - 1 \}$ 
\begin{align*}
\|\theta\|_2^2&\leq (1+t)^{-\beta}\|\theta_0\|_2^2+\frac{2\beta C}{(-\frac d2-\rs+\beta)}\kappa^{-\frac d2-\rs}(1+t)^{-\frac d2-\rs}\\
& \quad +\beta\frac{1}{2+d}\left(\frac{\beta}{2}\right)^{\frac{2+d}{2}}\frac{C^2}{(-\frac d4- \alpha + 1)^2 (\beta - d - 2\alpha + 1)}\kappa^{-d-2}(1+t)^{-d - 2\alpha + 1}\\
& \leq C \kappa^{-\max\{\frac d2+\rs, d+2\}}(1+t)^{ -\min\{\frac d2+\rs, d + 2\alpha - 1\} }\,,
\end{align*}
where the constant $C>0$ depends on $d,\rs$ and $\alpha$.

Note that for this result $\frac{1}{2} - \frac{d}{4} < \alpha < 1 - \frac{d}{4}$ holds, which implies that we gained a better decay depending on $\alpha$. 
The decay can be improved by iterating this argument $\ceil{\frac{1}{|\alpha|}}$ times obtaining
\begin{equation}\label{eq:propositionFinalResult}
\|\theta\|_2^2\leq C \kappa^{-\max\{\frac d2+\rs, 2m\}}(1+t)^{ -\min\{\frac d2+\rs, \frac d2+1\} }\,
\end{equation}
where $2m \geq d+2$ depends on the number of iteration needed. 
In order to see that the decay of the advection term cannot be better that $(1+t)^{-\frac d2-1}$, notice that if $\alpha \geq 1 - \frac{d}{4}$, then the right-hand side of \eqref{here1} is bounded by a constant and the decay is dictated by \eqref{triangle} and \eqref{eq:propositionFinalResult} is attained directly.
  \end{proof}
\begin{remark}[About sharpness in the case $\rs=0$ and $\alpha>\frac 12$]
Notice that the combination of this upper bound with the lower bound in Theorem \ref{th1} shows that $\|\theta(t)\|_2\sim (1+t)^{-\frac d4-\frac{\rs}{2}}$ for $\rs=0$ and $\alpha>\frac 12$. 
\end{remark}
  Using the result in Proposition \ref{pr1}, we can now prove Lemma \ref{UB-diff}.
\begin{proof}[Proof of Lemma \ref{UB-diff}]
We can summarize the proof of this lemma in two steps:
\begin{description}

 \item[Step 1] Define $\eta:=\theta-T$ where $\theta$ satisfies \eqref{AD} and $T$ satisfies \eqref{HE}. Then, by the energy estimate applied to equation \eqref{eq:Diff}, we have 
\begin{multline}\label{eq:ode5}
\frac{d}{dt}((1+t)^{\beta}\|\eta(t)\|_2^2)\\\leq \beta (1+t)^{\beta-1}\int_{S(t)}|\hat{\eta}(\xi,t)|^2\, d\xi+2(1+t)^{\beta}\|\nabla T(t)\|_{\infty}\|u(t)\|_{2}\|\theta(t)\|_{2}\,,
\end{multline}
where the set $S(t)$ is defined in \eqref{def:S}.

\item[Step 2] Inserting in \eqref{eq:ode5} the result of Proposition \ref{pr1}, the assumption \eqref{new-assumption-u} and the estimate for the gradient of the heat equation
\begin{equation}\label{gradient-he}
\|\nabla T(t)\|_{\infty}\lesssim \|\theta_0\|_2\kappa^{-\frac d4-\frac 12}(1+t)^{-\frac d4-\frac 12}\,
\end{equation}
we deduce the existence of constants $C$ and $m$ depending on $d, \rs\|\theta_0\|_2$ and $\alpha$ such that \eqref{ubound-eta} holds.
\end{description}

\noindent\textbf{Proof of Step 1:} Testing \eqref{eq:Diff} by $\eta$ and integrating by parts we find
\begin{eqnarray*}
 \frac 12\frac{d}{dt}\|\eta(t)\|_2^2+\kappa\|\nabla \eta(t)\|_2^2&=&-\int (u\cdot\nabla\theta)(\theta-T)\\
 &=&-\int u\theta\cdot\nabla  T \\
 &\leq&\|\nabla T(t)\|_{\infty}\|u(t)\|_{2}\|\theta(t)\|_{2} \, ,
\end{eqnarray*}
where in the second-to-last estimate we used the incompressibility condition  for $u$ and in the last estimate we applied H\"older's inequality.
We apply Plancherel's theorem to the left-hand side of the previous equation 
$$\frac{d}{dt}\|\hat{\eta}(t)\|_2^2\leq-2\kappa \int_{\R^d}|\xi|^2|\hat{\eta}(\xi,t)|^2\, d\xi+2\|\nabla T(t)\|_{\infty}\|u(t)\|_{2}\|\theta(t)\|_{2}$$
and then consider the time-dependent decomposition of the space domain, i.e. $$\R^d=S(t)\cup S^c(t)$$ where~\footnote{as in the proof of Proposition \ref{pr1}, see \eqref{eq:S}.}
\begin{equation*}
S(t):=\left\{\xi\in \R^d\; |\;  |\xi|\leq \left(\frac{\beta}{2\kappa(1+t)}\right)^{\frac 12}\right\}\,.
\end{equation*}
Imposing the decomposition and using the non-negativity of the integrals we have 
\begin{align*}
 \frac{d}{dt}\|\hat{\eta}(t)\|_2^2
&=-2\kappa\int_{S(t)}|\xi|^2|\hat{\eta}(\xi,t)|^2\, d\xi \\
& \hspace{1cm} -2\kappa\int_{S^c(t)}|\xi|^2|\hat{\eta}(\xi,t)|^2\, d\xi+2\|\nabla T(t)\|_{\infty}\|u(t)\|_{2}\|\theta(t)\|_{2}\\
&\leq-2\kappa\int_{S^c(t)}|\xi|^2|\hat{\eta}(\xi,t)|^2\, d\xi+2\|\nabla T(t)\|_{\infty}\|u(t)\|_{2}\|\theta(t)\|_{2}\\
&\leq-2\kappa \frac{\beta}{2\kappa(1+t)} \int_{S^c(t)}|\hat{\eta}(\xi,t)|^2\, d\xi+2\|\nabla T(t)\|_{\infty}\|u(t)\|_{2}\|\theta(t)\|_{2}\\
&= -\frac{\beta}{(1+t)}\int_{\R^d}|\hat{\eta}(\xi,t)|^2\, d\xi \\
& \hspace{1cm} + \frac{\beta}{(1+t)}\int_{S(t)}|\hat{\eta}(\xi,t)|^2\, d\xi+2\|\nabla T(t)\|_{\infty}\|u(t)\|_{2}\|\theta(t)\|_{2}\,.
\end{align*}
Hence, since the first integral on the r.h.s. is positive, we obtain \eqref{eq:ode5}.

\noindent\textbf{Proof of Step 2:} Consider equation \eqref{eq:Diff} in Fourier space 
\begin{equation*}
 \partial_t\hat\eta+\kappa|\xi|^2\hat\eta=-\widehat{u\cdot \nabla\theta}\,,
\end{equation*}
the solution of which has the following representation
\begin{equation*}
 \hat \eta(\xi,t)=\int_0^te^{-\kappa |\xi|^2(t-s)}(-\widehat{u\cdot \nabla \theta})(\xi,s)\, ds\,.
\end{equation*}
where we used that $\hat{\eta}_0(\xi)=0$. Imitating the argument in Step 2 of Proposition \ref{pr1} we have 
\begin{equation}\label{rep}
|\hat\eta(\xi,t)|^2
\leq \left(\int_0^t e^{-\kappa|\xi|^2 (t-s)}|\widehat{u\cdot \nabla \theta}|(\xi,s)\, ds\right)^2\,
\end{equation}
and 
\begin{eqnarray*}
   |\widehat{u\cdot\nabla \theta}|
   &\leq&|\xi|\|\theta(t)\|_2\|u(t)\|_2\,.
  \end{eqnarray*}
Employing the bound \eqref{result:ub} in Proposition \ref{pr1} 
together with the assumption 
\begin{equation}\label{new-assumption-u-for-th}
\|u(t)\|_2\sim (1+t)^{-\alpha} \quad \mbox{with} \quad \alpha > \frac{1}{2} - \frac{d}{4}
\end{equation}
 we have, for $\rs \leq 1$
\begin{eqnarray*}
   |\widehat{u\cdot\nabla \theta}(t)|
   &\leq&|\xi|\|\theta(t)\|_2\|u(t)\|_2\leq C\kappa^{-m}|\xi|(1+t)^{-\frac d4-\frac{\rs}{2}- \alpha }\,.
  \end{eqnarray*}

Inserting this bound in \eqref{rep}, we obtain
\begin{eqnarray*}
 |\hat\eta(\xi,t)|^2
&\lesssim& C^2\kappa^{-2m}|\xi|^2\left(\int_0^t  e^{-\kappa|\xi|^2 (t-s)}(1+s)^{-\frac d4-\frac{\rs}{2}- \alpha }\, ds\right)^2\,\\
&\leq&  C^2\kappa^{-2m}|\xi|^2\left(\int_0^t (1+s)^{-\frac d4-\frac{\rs}{2}- \alpha }\, ds\right)^2\\
&\leq&  C^2\kappa^{-2m}\frac{1}{(-\frac d4-\frac{\rs}{2}- \alpha +1)^2} |\xi|^2\left( (1+t)^{-\frac d4-\frac{\rs}{2}- \alpha + 1}-1\right)^2\,,\\
\end{eqnarray*}
where we used $\left|e^{-\kappa|\xi|^2(t-s)}\right|\leq 1$.
Observe that $\left( (1+t)^{-\frac d4-\frac{\rs}{2}- \alpha + 1}-1\right)^2\lesssim 1$ if $\alpha >1 - \frac{d}{4} - \frac{\rs}{2}$.
Then
$$ |\hat\eta(\xi,t)|^2\lesssim C^2\kappa^{-2m}\frac{1}{(-\frac d4-\frac{\rs}{2}- \alpha +1)^2} |\xi|^2\,.$$
Integrating over $S(t)$ and using \eqref{triangle} we have
\begin{eqnarray*}
 \int_{S(t)}|\hat\eta(\xi,t)|^2\, d\xi
&\lesssim&  \frac{C^2\kappa^{-2m}}{(2+d)}\frac{1}{(-\frac d4-\frac{\rs}{2}- \alpha +1)^2} \left(\frac{\beta}{2\kappa(1+t)}\right)^{\frac{d+2}{2}}\\
&=& \frac{C^2 \beta^{\frac{d+2}{2}}}{(2+d)} \frac{1}{(-\frac d4-\frac{\rs}{2}- \alpha +1)^2}\kappa^{-2m-\frac d2-1}(1+t)^{-\frac d2-1}\,.
\end{eqnarray*}
Combining in \eqref{eq:ode5} estimate \eqref{gradient-he}
together with the the upper bound \eqref{result:ub}, we deduce
\begin{align*}
\frac{d}{dt}((1+t)^{\beta}\|\eta(t)\|_2^2)
\lesssim & \frac{C^2 \beta\beta^{\frac{d+2}{2}}}{(2+d)} \frac{1}{(-\frac d4-\frac{\rs}{2}- \alpha +1)^2}\kappa^{-2m-\frac d2-1}(1+t)^{-\frac d2-1+\beta-1}\\
& + C\|\theta_0\|_2\kappa^{-m-\frac d4-\frac 12} (1+t)^{-\frac d2-\frac{\rs}{2}- \alpha - \frac{1}{2} +\beta}\,.
\end{align*}
Integrating in time, choosing $\beta>\max\{\frac d2+1, \frac d2 + \frac{\rs}{2} + \alpha + \frac{1}{2} \}$, and dividing by $(1+t)^{\beta}$
\begin{align*}
\|\eta(t)\|_2^2
\lesssim & \frac{C^2 \beta\beta^{\frac{d+2}{2}}}{(2+d)} \frac{1}{(-\frac d4-\frac{\rs}{2}- \alpha +1)^2(-\frac d2-1+\beta)}\kappa^{-2m-\frac d2-1}(1+t)^{-\frac d2-1}\\
& + C\|\theta_0\|_2\kappa^{-m-\frac d4-\frac 12} \frac{1}{(-\frac d2-\frac{\rs}{2}- \alpha + \frac{1}{2} +\beta)}(1+t)^{-\frac d2-\frac{\rs}{2}- \alpha + \frac{1}{2}}\,.
\end{align*}
So we obtain
\begin{equation*}
\|\eta(t)\|_2^2\leq C
\kappa^{-m-\frac d4-\frac 12}(1+t)^{-\min\{\frac d2+1, \frac d2 + \frac{\rs}{2} + \alpha - \frac{1}{2} \}}
\end{equation*}
where $C$ is a constant depending on $d,\rs,\alpha$.
If instead $\rs\geq 1$, following the previous argument\footnote{ to easily see this, set $\rs=1$ in the computations above.}, choosing $\alpha > \frac{1}{2} - \frac{d}{4}$ we find 
\begin{equation*}
\|\eta(t)\|_2^2\leq C
\kappa^{-m-\frac d4-\frac 12}(1+t)^{-\min\{\frac d2+1, \frac d2 + \alpha \}}\,,
\end{equation*}
where $C$ is (another) constant depending on $d,\rs,\alpha$.

\end{proof}
\section{Theorem \ref{th2}}
The proof of Theorem \ref{th2} is based on the result in Theorem \ref{th1} and on the upper bound on the gradient of the solution of \eqref{AD}.
\begin{lemma}[Upper bound on the gradient]\label{UBG}
 Let the  assumptions in Proposition \ref{pr1} be satisfied.
 \begin{itemize}
 \item If 
 \begin{equation}\label{assumption-gradient2}
  \|\nabla u(t)\|_{\infty}\sim \frac{1}{(1+t)^{\nu}} \quad \mbox{ with } \; \nu>1\,,
 \end{equation}
 then there exists a constant $C>0$ depending on  $d$, $\rs$, $\|\nabla\theta_0\|_{2}, \alpha$ and $\nu$  such that 
\begin{equation*}
\|\nabla \theta(t)\|_2\leq C' \kappa^{-n-\frac 12}(1+t)^{-\min\{\frac d4+\frac{\rs}{2}+\frac 12, \frac d4+1\}}e^{\frac{[(1+t)^{-\nu+1}-1]}{-\nu+1}}\,.
\end{equation*}
\item If
 \begin{equation}\label{assumption-gradient1}
  \|\nabla u(t)\|_{\infty}\sim \frac{1}{(1+t)}\,,
 \end{equation}
 then there exists a constant $C''>0$ depending on $d$, $\rs$, $\|\nabla\theta_0\|_{2}, \alpha$ and $\nu$  such that 
 \begin{equation*}
  \|\nabla \theta(t)\|_2\leq C''\kappa^{-n-\frac12}(1+t)^{-\min\{\frac d4+\frac{\rs}{2}+\frac 12,\frac d4+1\}}\,.
 \end{equation*}
\item If 
 \begin{equation}\label{assumption-gradient3}
  \|\nabla u(t)\|_{\infty}\sim \frac{1}{(1+t)^{\nu}} \quad \mbox{ with } \; 0\leq\nu<1\,,
 \end{equation}
 then there exists a constant $C'''>0$ depending on $d$, $\rs$, $\|\nabla\theta_0\|_{2}, \alpha$ and $\nu$  such that 
 \begin{equation*}
\|\nabla \theta(t)\|_2\leq C''' \kappa^{-n-\frac 12}(1+t)^{-\min\{\frac d4+\frac{\rs}{2}+\frac 32,\frac d4+2\}}e^{\frac{[(1+t)^{-\nu+1}-1]}{-\nu+1}}\,.
\end{equation*}
\end{itemize}
In the statements above $n=\max\{\frac d4+\frac{\rs}{2},m\}$. 
\end{lemma}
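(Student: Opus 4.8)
The plan is to push the \emph{Fourier-splitting} argument of Proposition~\ref{pr1} up one derivative, now treating the already-established upper bound \eqref{result:ub} on $\|\theta(t)\|_2$ as an \emph{input}. The new ingredient — and the reason the conclusion splits into three cases — is a lower-order term $\|\nabla u(t)\|_\infty\|\nabla\theta(t)\|_2^2$ appearing in the energy identity for $\nabla\theta$; I would absorb it through an integrating factor, and whether $\int_0^t\|\nabla u(s)\|_\infty\,ds$ stays bounded, grows logarithmically, or grows like a positive power of $t$ is exactly the trichotomy $\nu>1$, $\nu=1$, $0\le\nu<1$.

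\textbf{Energy identity and Fourier-splitting.} First I would test \eqref{AD} with $-\Delta\theta$, integrate by parts, and use $\nabla\cdot u=0$ to kill the transport contribution $\int u\cdot\nabla\big(\tfrac12|\nabla\theta|^2\big)\,dx$, obtaining
$$\frac12\frac{d}{dt}\|\nabla\theta(t)\|_2^2+\kappa\|\Delta\theta(t)\|_2^2 = -\int_{\R^d}\sum_{j,k}(\partial_k u_j)(\partial_j\theta)(\partial_k\theta)\,dx \le \|\nabla u(t)\|_\infty\|\nabla\theta(t)\|_2^2 ,$$
equivalently, by Plancherel, $\frac{d}{dt}\int_{\R^d}|\xi|^2|\hat\theta|^2\,d\xi\le -2\kappa\int_{\R^d}|\xi|^4|\hat\theta|^2\,d\xi+2\|\nabla u(t)\|_\infty\int_{\R^d}|\xi|^2|\hat\theta|^2\,d\xi$. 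Then, with the set $S(t)$ from \eqref{def:S} (i.e. $|\xi|\le R(t)$, $R(t)^2=\beta/(2\kappa(1+t))$) I would split $\R^d=S(t)\cup S^c(t)$, use $|\xi|^4\ge R(t)^2|\xi|^2$ on $S^c(t)$, and multiply by the weight $(1+t)^\beta$, whose logarithmic derivative is $2\kappa R(t)^2$, so that the diffusive term cancels the weight derivative and leaves
$$\frac{d}{dt}\!\left((1+t)^\beta\|\nabla\theta(t)\|_2^2\right) \le \beta(1+t)^{\beta-1}\int_{S(t)}|\xi|^2|\hat\theta(\xi,t)|^2\,d\xi + 2(1+t)^\beta\|\nabla u(t)\|_\infty\|\nabla\theta(t)\|_2^2 .$$

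\textbf{Low-frequency remainder and closing the estimate.} On $S(t)$ one has $|\xi|^2\le\beta/(2\kappa(1+t))$, hence $\int_{S(t)}|\xi|^2|\hat\theta|^2\,d\xi\le\frac{\beta}{2\kappa(1+t)}\|\theta(t)\|_2^2$; inserting \eqref{result:ub} (squared), with $n:=\max\{\tfrac d4+\tfrac\rs2,m\}$ ($m$ as in Proposition~\ref{pr1}), gives $\int_{S(t)}|\xi|^2|\hat\theta(\xi,t)|^2\,d\xi\lesssim\kappa^{-2n-1}(1+t)^{-\min\{\frac d2+\rs,\frac d2+1\}-1}$ — one could instead split $\hat\theta$ into its heat part, controlled by the gradient analogue of \eqref{first-part}, and its Duhamel part, controlled as in Step~2 of Proposition~\ref{pr1}, with the same $(1+t)$-exponent. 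Writing $W(t):=(1+t)^\beta\|\nabla\theta(t)\|_2^2$ the previous display reads $W'(t)\le g(t)+2\|\nabla u(t)\|_\infty W(t)$ with $g(t)\lesssim\kappa^{-2n-1}(1+t)^{\beta-\min\{\frac d2+\rs,\frac d2+1\}-2}$. Multiplying by $\exp\big(-2\int_0^t\|\nabla u\|_\infty\big)$, integrating, bounding the integrating factor by $1$ inside the integral, choosing $\beta$ large enough that $\int_0^t(1+s)^{\beta-\min\{\frac d2+\rs,\frac d2+1\}-2}\,ds\lesssim(1+t)^{\beta-\min\{\frac d2+\rs,\frac d2+1\}-1}$, and dividing by $(1+t)^\beta$ (absorbing the faster-decaying $\|\nabla\theta_0\|_2^2$ term) yields
$$\|\nabla\theta(t)\|_2^2 \lesssim \kappa^{-2n-1}(1+t)^{-\min\{\frac d2+\rs,\frac d2+1\}-1}\,\exp\!\Big(2\int_0^t\|\nabla u(s)\|_\infty\,ds\Big) .$$
Taking square roots, using $\tfrac12\big(\min\{\tfrac d2+\rs,\tfrac d2+1\}+1\big)=\min\{\tfrac d4+\tfrac\rs2+\tfrac12,\tfrac d4+1\}$ and the $\kappa$-power $\kappa^{-n-1/2}$, it only remains to evaluate $\int_0^t\|\nabla u(s)\|_\infty\,ds$. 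Under \eqref{assumption-gradient2} ($\nu>1$) this is $\lesssim\frac{(1+t)^{-\nu+1}-1}{-\nu+1}$ and bounded; keeping it explicit gives the first estimate. Under \eqref{assumption-gradient1} ($\nu=1$) it is $\lesssim\ln(1+t)$, so the exponential factor is a power $(1+t)^{c}$ with $c$ fixed by the implied constant, which is absorbed by enlarging $\beta$ — the reason $\beta$ is allowed to depend on the data — so the prefactor disappears and one gets the second estimate. Under \eqref{assumption-gradient3} ($0\le\nu<1$) the integral grows like $\frac{(1+t)^{-\nu+1}-1}{-\nu+1}\to\infty$, the exponential factor is genuine, and a $\nu$-adapted choice of $\beta$ and of the integrating factor (which now decays super-polynomially, concentrating the remainder near the endpoint $s=t$) produces the extra $(1+t)^{-1}$, hence the third estimate.

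\textbf{Main obstacle.} The crux is the term $\|\nabla u(t)\|_\infty\|\nabla\theta(t)\|_2^2$: being of the same order as the quantity estimated, it cannot be absorbed by diffusion, so one is forced to carry the factor $\exp\big(\int_0^t\|\nabla u\|_\infty\big)$ through the whole computation; obtaining the sharp polynomial rate in each of the three regimes — in particular the extra power when $0\le\nu<1$ — then hinges on a careful, regime-dependent choice of the free exponent $\beta$.
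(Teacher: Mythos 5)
Your overall strategy is the paper's: the energy identity obtained by testing with $\Delta\theta$, Fourier splitting over the ball $S(t)$ of radius $\sim(\kappa(1+t))^{-1/2}$, the low-frequency bound $\int_{S(t)}|\xi|^2|\hat\theta|^2\,d\xi\le R(t)^2\|\theta(t)\|_2^2$ fed by Proposition \ref{pr1}, and a Gronwall argument whose integrating factor carries $\exp\bigl(2\int_0^t\|\nabla u\|_\infty\bigr)$. The first case \eqref{assumption-gradient2} goes through exactly as you describe, since for $\nu>1$ the factors $\exp\bigl(\pm2\int_0^s\|\nabla u\|_\infty\,d\tau\bigr)$ are bounded above and below by positive constants and discarding them costs nothing.

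The gap is in the other two cases, and it is created by the step ``bounding the integrating factor by $1$ inside the integral.'' Once you do that, your final estimate reads
\begin{equation*}
\|\nabla\theta(t)\|_2^2\lesssim \kappa^{-2n-1}(1+t)^{-\min\{\frac d2+\rs,\frac d2+1\}-1}\exp\Bigl(2\int_0^t\|\nabla u(s)\|_\infty\,ds\Bigr)\,,
\end{equation*}
and the exponential prefactor multiplies everything and contains no $\beta$, so enlarging $\beta$ cannot touch it. For $\nu=1$ that prefactor is a genuine power $(1+t)^{2c}$ ($c$ the hidden constant in $\|\nabla u(t)\|_\infty\sim(1+t)^{-1}$), so your bound loses $(1+t)^{c}$ in the norm and the claimed absorption mechanism is not valid. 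The repair — and what the paper does — is to keep the combined coefficient $a(\tau)=-\frac{2\mu}{1+\tau}+2\|\nabla u(\tau)\|_\infty$ in a single integrating factor and evaluate $\int_0^t b(s)e^{-\int_0^s a}\,ds$ without discarding it: for $\nu=1$ one has $e^{-\int_0^s a}=(1+s)^{2\mu-2c}$, the Duhamel integral is endpoint-dominated for $\mu$ large, and the $(1+s)^{-2c}$ inside cancels the $(1+t)^{2c}$ outside. For $0\le\nu<1$ the same retained factor is what produces the extra decay you assert but do not derive: the concrete ingredient is the pointwise inequality $e^{-2[(1+s)^{1-\nu}-1]/(1-\nu)}\le(1+s)^{-2}$ for all $s\ge0$ (a consequence of $e^{x}\ge1+x$), which, applied to the integrating factor inside the Duhamel integral, yields the additional $(1+t)^{-2}$ in $\|\nabla\theta(t)\|_2^2$ and hence the exponent $-\min\{\frac d4+\frac{\rs}{2}+\frac32,\frac d4+2\}$; a ``$\nu$-adapted choice of $\beta$'' alone does not supply this. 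With these two repairs your argument coincides with the paper's.
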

\subsection{Proof of Theorem  \ref{th2}}\label{proof-th2}
\begin{proof}[Proof of Theorem \ref{th2}]
The interpolation inequality 
$$\|\theta (t)\|_2^2\leq\|\nabla\theta(t)\|_2\|\nabla^{-1}\theta(t)\|_2\,,$$
together with the lower bound in Theorem \ref{th1} (holding for $\rs<1$) and the upper bound in Lemma \ref{UBG} yield
\begin{equation*}
\begin{array}{rlll}
  \|\nabla^{-1}\theta(t)\|_2&\gtrsim&C \kappa^{\frac d2-\rs+m+\frac 12}e^{-\frac{[(1+t)^{-\nu+1}-1]}{-\nu+1}}(1+t)^{-\frac d4-\frac{\rs}{2}+\frac 12} & \mbox{ with } \eqref{assumption-gradient2} \\
  \\
  \|\nabla^{-1}\theta(t)\|_2&\geq& C \kappa^{\frac d2-\rs+m+\frac 12}(1+t)^{-\frac d4-\frac{\rs}{2}+\frac 12}  & \mbox{ with } \eqref{assumption-gradient1}\\
  \\
    \|\nabla^{-1}\theta(t)\|_2&\geq& C \kappa^{\frac d2-\rs+m+\frac 12}e^{-\frac{[(1+t)^{-\nu+1}-1]}{-\nu+1}}(1+t)^{-\frac d4-\frac{\rs}{2}+\frac 32}   & \mbox{ with } \eqref{assumption-gradient3}\,.
\end{array}
\end{equation*}
\end{proof}
\noindent Notice that our lower bound becomes trivial when $\kappa\rightarrow 0$.
\begin{proof}[Proof of Corollary \ref{cor}]
The statement can be easily obtained by combining Theorem \ref{th1} and Lemma \ref{UBG} in the interpolation inequality \eqref{interpol}.
\end{proof}
\subsection{Proof of the lemma}
Inspired by  \cite{SchoScho2006}, where bounds on the derivative of QG equation are obtained, the proof of Lemma \ref{UBG} results from the combination of standard energy estimates, the upper bound in Proposition \ref{pr1} and a classical Gronwall-type argument \cite{lakshmikantham2015stability}.
\begin{proof}[Proof of Lemma \ref{UBG}]
We start by testing the advection-diffusion equation \eqref{AD} with $\Delta\theta$
\begin{eqnarray*}
 \frac 12\frac{d}{dt}\int |\nabla \theta|^2\, dx+\kappa \int |\Delta\theta|^2\, dx
 &=& -\int\partial_ju_i\partial_i\theta\partial_j\theta\, dx\\
 &\leq& \left|-\int\partial_ju_i\partial_i\theta\partial_j\theta\, dx\right|\\
 &\leq&\|\nabla u\|_{\infty}\|\nabla \theta\|_2^2\,,
\end{eqnarray*}
thus
\begin{equation}\label{test}
 \frac{d}{dt} \|\nabla \theta\|_{2}^2+2\kappa \|\Delta\theta\|_2^2
\leq
2 \|\nabla u\|_{\infty}\|\nabla \theta\|_2^2\,.
\end{equation}
Define 
\begin{equation*}
W(t)=\left\{\xi\in \R^d: |\xi|\leq \left(\frac{\mu}{\kappa(1+t)}\right)^{\frac 12}\right\}\,
\end{equation*}
and apply Plancherel's identity to get
\begin{eqnarray*}
 \|\Delta\theta(t)\|_2^2&=&\int |\xi|^4|\hat\theta(\xi,t)|^2\, d\xi\\
                  &\geq& \int_{W^c(t)}|\xi|^4|\hat\theta(\xi,t)|^2\, d\xi\\
                  &\geq&\frac{\mu}{\kappa(1+t)}\int_{W^c(t)}|\xi|^2|\hat\theta(\xi,t)|^2\, d\xi\\
                  &=&\frac{\mu}{\kappa(1+t)}\left(\int_{\R^d}|\xi|^2|\hat\theta(\xi,t)|^2\,d\xi-\int_{W(t)}|\xi|^2|\hat{\theta}(\xi,t)|^2\, d\xi\right)\\
                  &\geq&\frac{\mu}{\kappa(1+t)}\left(\|\nabla \theta(t)\|_2^2-\frac{\mu}{\kappa(1+t)}\int_{\R^d}|\hat{\theta}(\xi,t)|^2\, d\xi\right)\\
                  &=&\frac{\mu}{\kappa(1+t)}\left(\|\nabla \theta(t)\|_2^2-\frac{\mu}{\kappa(1+t)}\|\theta(t)\|_2^2\right)\,.
\end{eqnarray*}
Plug the result of Proposition \ref{pr1}, i.e. the upper bound~\footnote{Recall that the upper bound  holds under the assumption
$$\|u(t)\|_{2}\sim (1+t)^{-\alpha}\, \mbox{ with } \alpha > \frac 12-\frac d4\,.$$ }
$$
   \|\theta(t)\|_2\lesssim C \kappa^{-n}(1+t)^{ -\min\{\frac d4+\frac{\rs}{2}, \frac d4+\frac 12\} }
  $$
  where $n=\max\{\frac d4+\frac{\rs}{2}, m\}$
in the previous estimate to obtain
\begin{eqnarray*}
 \|\Delta\theta(t)\|_2^2
                  &\geq &\frac{\mu}{\kappa(1+t)}\left(\|\nabla \theta(t)\|_2^2
                  -\frac{\mu C^2}{\kappa(1+t)}\kappa^{-2n}(1+t)^{-\min\{\frac d2+\rs,\frac d2+1\}}\right)\\
                  &=&\frac{\mu}{\kappa(1+t)}\left(\|\nabla \theta(t)\|_2^2
                  -\mu C^2 \kappa^{-2n-1}(1+t)^{-\min\{\frac d2+\rs+1,\frac d2+2\}}\right)\,.
\end{eqnarray*}
Inserting this lower bound in \eqref{test} we obtain the differential inequality
\begin{multline}\label{diff-ineq}
 \frac{d}{dt} \|\nabla \theta(t)\|_2^2+\frac{2\mu}{(1+t)}\|\nabla \theta(t)\|_2^2 \\
 \leq 2\|\nabla u(t)\|_{\infty}\|\nabla \theta(t)\|_2^2
 +2\mu^2C^2\kappa^{-m-2}(1+t)^{-\min\{\frac d2+\rs+2,\frac d2+3\}}\,.
\end{multline}
Set $X(t)=\|\nabla \theta(t)\|_2^2$ and rewrite \eqref{diff-ineq} as
\begin{equation}\label{diff-ineq3}
 \frac{d}{dt} X(t)\leq a(t)X(t)+b(t)\,,
\end{equation}
where $$a(t)=-\frac{2\mu}{(1+t)}+2\|\nabla u(t)\|_{\infty} \quad \mbox{ and }\quad  
b(t)=2\mu^2C^2\kappa^{-2n-1}(1+t)^{-\min\{\frac d2+\rs+2,\frac d2+3\}}\,.$$
Define $q(t)=X(t)e^{-\int_{0}^{t}a(s)\, ds}$ with $q(0)=\|\nabla \theta_0\|_2^2$. Then 
$$q'(t)=[X'(t)-a(t)X(t)]e^{-\int_{0}^{t}a(s)\, ds}\leq b(t)e^{-\int_{0}^{t}a(s)\, ds}\,,$$
and therefore
$$q(t)\leq \|\nabla \theta_0\|_2^2+\int_0^t b(s) e^{-\int_{0}^{s} a(\tau)\, d\tau}\, ds\,,$$
which, by the definition of $q$, turns into
\begin{equation}\label{solution}
X(t)\leq \|\nabla \theta_0\|_2^2e^{\int_{0}^{t}a(s)\, ds}+\left(\int_0^t b(s) e^{-\int_{0}^{s}a(\tau)\, d\tau}\, ds\right) e^{\int_{0}^{t}a(s)\, ds}\,.
\end{equation}
We now split the analysis in three cases:
\begin{itemize}
\item[1)]
 Assume \eqref{assumption-gradient2}. Then \eqref{diff-ineq3} holds with 
 $$a(t)=-\frac{2\mu}{(1+t)}+ \frac{2}{(1+t)^{\nu}} \quad \mbox{ and }\quad 
  b(t)=2\mu^2C^2\kappa^{-2n-1}(1+t)^{-\min\{\frac d2+\rs+2,\frac d2+3\}}\,.$$
The conclusion is obtained by computing the right-hand side of \eqref{solution}:
the term multiplying $\|\nabla \theta_0\|_2^2$ is
$$e^{\int_{0}^{t}a(s)ds}=(1+t)^{-2\mu}e^{2\frac{[(1+t)^{-\nu+1}-1]}{-\nu+1}}$$
and, using that for all $s>0$ and $\nu>1$ we have $e^{-2\frac{[(1+s)^{-\nu+1}-1]}{-\nu+1}}\leq 1$ and choosing $2\mu>\min\{ \frac d2+\rs+1, \frac d2+2\}$, the second term can be bounded as follows
\begin{multline*}
\left(\int_0^tb(s)e^{-\int_{0}^{s}a(\tau)\, d\tau}\, ds\right)\; e^{\int_{0}^{t}a(s)\, ds} \\
\leq2\mu^2C^2\kappa^{-2n-1}\frac{(1+t)^{-\min\{\frac d2+\rs+1, \frac d2+2\}}}{-\min\{\frac d2+\rs+1-2\mu, \frac d2+2-2\mu\}} e^{2\frac{[(1+t)^{-\nu+1}-1]}{-\nu+1}}\,.
\end{multline*}
Hence we obtain
\begin{multline*}
	\|\nabla \theta(t)\|_2^2\leq \|\nabla \theta_0\|_2^2(1+t)^{-2\mu}e^{2\frac{[(1+t)^{-\nu+1}-1]}{-\nu+1}}\\
	+2\mu^2C^2\kappa^{-2n-1}\frac{(1+t)^{-\min\{\frac d2+\rs+1, \frac d2+2\}}}{-\min\{\frac d2+\rs+1-2\mu, \frac d2+2-2\mu\}} e^{2\frac{[(1+t)^{-\nu+1}-1]}{-\nu+1}}\,.
\end{multline*}
Because of our choice of $\mu$ we conclude 
$$\|\nabla \theta(t)\|_2^2\leq (C')^2 \kappa^{-2n-1}(1+t)^{-\min\{\frac d2+\rs+1, \frac d2+2\}}e^{2\frac{[(1+t)^{-\nu+1}-1]}{-\nu+1}}\,,$$
where 
$$C':=\left(\|\nabla\theta_0\|_2^2 \kappa^{2n+ 1}+\frac{2\mu^2C^2}{-\min\{\frac d2+\rs+1-2\mu, \frac d2+2-2\mu\}} \right)^{\frac 12}\,.$$
\item[2)] Assume \eqref{assumption-gradient1}. In this case the coefficients in \eqref{solution} are
$$a(t)=\frac{-2\mu+2}{(1+t)} \quad\mbox{ and }\quad b(t)=2\mu^2C^2\kappa^{-2n-1}(1+t)^{-\min\{\frac d2+\rs+2,\frac d2+3\}}\,.$$
We compute
$$e^{\int_{0}^{t}a(s)\, ds}=(1+t)^{-2\mu+2}$$
and, choosing $2\mu>\min\{\frac d2+\rs+1, \frac d2+2\}$, we have
\begin{multline*}
\left(\int_0^tb(s)e^{-\int_{0}^{t}a(\tau)\, d\tau}\, ds\right)\; e^{\int_{0}^{t}a(\tau)\, d\tau}\\
\leq\frac{2\mu^2C^2\kappa^{-2n-1}}{-\min\{\frac d2+\rs+1-2\mu, \frac d2+2-2\mu\}}(1+t)^{-\min\{\frac d2+\rs+1, \frac d2+2\}    }\,.
\end{multline*}
Substituting in \eqref{solution}, we obtain
\begin{multline*}
\|\nabla \theta(t)\|_2^2 \leq \|\nabla \theta_0\|_2^2(1+t)^{-2(\mu-1)}\\
+\frac{2\mu^2C^2\kappa^{-2n-1}}{-\min\{\frac d2+\rs+1-2\mu, \frac d2+2-2\mu\}}(1+t)^{-\min\{\frac d2+\rs+1, \frac d2+2\}    }\,,
\end{multline*}
which, because of our choice of $\mu$, implies 
$$\|\nabla \theta(t)\|_2^2\leq (C'')^2\kappa^{-2n-1}(1+t)^{-\min\{\frac d2+\rs+1, \frac d2+2\}}\,,$$
where 
$$C'':=\left(\|\nabla \theta_0\|_2^2 \kappa^{2n+ 1} +\frac{2\mu^2C^2}{-\min\{\frac d2+\rs+1-2\mu, \frac d2+2-2\mu\}}\right)^{\frac 12}\,.$$

\item[3)] Finally assume \eqref{assumption-gradient3}, then
 $$a(t)=-\frac{2\mu}{(1+t)}+ \frac{2}{(1+t)^{\nu}} \quad \mbox{ and }\quad  b(t)=2\mu^2C^2\kappa^{-2n-1}(1+t)^{-\min\{\frac d2+\rs+2,\frac d2+3\}}\,.$$
The fundamental solution can be computed easily
$$e^{\int_{0}^{t}a(s)ds}=(1+t)^{-2\mu}e^{2\frac{[(1+t)^{-\nu+1}-1]}{-\nu+1}}\,.$$
For the second term of the right-hand side of \eqref{solution} we first compute
\begin{multline*}
\int_0^tb(s)e^{-\int_{0}^{s}a(\tau)\, d\tau}\, ds \\
=2\mu^2C^2\kappa^{-2n-1}\int_0^t (1+s)^{-\min\{\frac d2+\rs+2,\frac d2+3\}}
(1+s)^{2\mu}e^{-2\frac{[(1+s)^{-\nu+1}-1]}{-\nu+1}}\, ds\,,
\end{multline*}
and notice that for $\nu\in[0,1)$ and all $s\geq 0$
\footnote{It is easy to see this by Taylor expansion about $\nu=1$. To leading order
$$e^{-2\frac{[(1+s)^{\varepsilon}-1]}{\varepsilon}}\sim e^{-\frac{2}{\varepsilon}(\varepsilon \log(1+s))} $$
where $\varepsilon:=1-\nu\ll 1$.}
$$e^{-2\frac{[(1+s)^{-\nu+1}-1]}{-\nu+1}}\leq (1+s)^{-2}$$
and therefore, choosing $\mu>\min\{\frac d2+\rs+3,\frac d2+4 \}$ we have 
\begin{multline*}
\left(\int_0^tb(s)e^{-\int_{0}^{s}a(\tau)\, d\tau}\, ds\right)\; e^{\int_{0}^{t}a(s)\, ds} \\
\leq2\mu^2C^2\kappa^{-2n-1}\frac{(1+t)^{-\min\{\frac d2+\rs+3,\frac d2+4\}}}{-\min \{\frac d2+\rs+3-2\mu, \frac d2+4-2\mu\}} e^{2\frac{[(1+t)^{-\nu+1}-1]}{-\nu+1}}\,.
\end{multline*}
In conclusion, we obtain
\begin{multline*}
	\|\nabla \theta(t)\|_2^2\leq \|\nabla \theta_0\|_2^2(1+t)^{-2\mu}e^{2\frac{[(1+t)^{-\nu+1}-1]}{-\nu+1}}\\
	+2\mu^2C^2\kappa^{-2n-1}\frac{(1+t)^{-\min\{\frac d2+\rs+3,\frac d2+4\}}}{-\min \{\frac d2+\rs+3-2\mu, \frac d2+4-2\mu\}} e^{2\frac{[(1+t)^{-\nu+1}-1]}{-\nu+1}}\,,
\end{multline*}
which, by our choice of $\mu$, implies
$$\|\nabla \theta(t)\|_2^2\leq (C''')^2 \kappa^{-2n-1}(1+t)^{-\min\{\frac d2+\rs+3,\frac d2+4\}}e^{2\frac{[(1+t)^{-\nu+1}-1]}{-\nu+1}}\,,$$
where 
$$C''':=\left(\|\nabla\theta_0\|_2^2 \kappa^{2n + 1} +\frac{2\mu^2C_2^2}{-\min \{\frac d2+\rs+3-2\mu, \frac d2+4-2\mu\}} \right)^{\frac 12}\,.$$
\end{itemize}
\end{proof}
\section{Conclusion}

In the present paper, we derive lower bounds for quantities characterizing the effectiveness of mixing in passive scalar transport.
For the discussed cases, the initial data is specified by so-called decay characters $\rs$ \cite{niche2015decay, niche2016decay} and the divergence-free vector field $u$ is constrained in its temporal decay.
With this, our approach follows the reasoning of \cite{DM2018}, i.e.\ proving lower bounds on $\|\theta\|_2$ and $\| \nabla^{-1} \theta\|_2$ accordingly imply bounds on the filamentation length $\lambda(t)$.
Specifically, employing the Fourier splitting method \cite{Scho1986, Scho1991} and under the assumptions
\begin{itemize}
\item $\theta_0\in L^2(\R^d)$ with decay character $\rs$ such that $0<\rs<1$ for $d=2$ and $-\frac 12<\rs<1$ for $d=3$,\\
\item $\|u(t)\|_{2}\sim (1+t)^{-\alpha}$ with $\alpha>\frac \rs 2+\frac 12$,\\
\item $\|\nabla u(t)\|_{\infty}\sim (1+t)^{-\nu}$,
\end{itemize}
we prove the following lower bound for the filamentation length $\lambda$ (defined in \eqref{def-lambda}): for $\nu>1$
 $$ \lambda(t)\gtrsim C  \kappa^{-\frac d4-\frac{\rs}{2}+m+\frac 12}e^{-\frac{[(1+t)^{-\nu+1}-1]}{-\nu+1}}(1+t)^{\frac 12} \rightarrow \infty \quad \mbox{ as } t\rightarrow \infty \,,$$
 for $\nu=1$
 $$
  \lambda(t)\gtrsim C \kappa^{-\frac d4-\frac{\rs}{2}+m+\frac 12}(1+t)^{\frac 12} \rightarrow \infty \quad \mbox{ as } t\rightarrow \infty \,,$$
  and for $0\leq \nu <1$
  $$ \lambda(t)\gtrsim   C\kappa^{-\frac d4-\frac{\rs}{2}+m+\frac 12}e^{-\frac{[(1+t)^{-\nu+1}-1]}{-\nu+1}}(1+t)^{\frac 32}\rightarrow 0 \quad \mbox{ as } t\rightarrow \infty \,.$$
  This result is contained in Corollary \ref{cor} and its proof is based on the combination of Theorem \ref{th1} and Theorem \ref{th2}.

Notice that, according to Remark \ref{initial-data}, a class of initial data for which $\rs\in (-\frac 12,1)$ in $\R^3$ is given by $\theta_0\in L^2(\R^3)\cap L^p(\R^3)$ with $1<p<\frac 65$. In $\R^2$, instead, an example of initial data such that $\rs\in (0,1)$ is given by $\theta_0\in L^2(\R^2)$ such that $|\hat{\theta_0}(\xi)|\sim |\xi|^{\alpha}$ for $|\xi|\leq \delta$ and $0<\alpha<1$. 

We want to conclude by comparing the behavior of $\lambda$ expressed in \eqref{lambda-lb-ad} when considering either $u\equiv0$ (pure diffusion) or $\kappa=0$ (pure advection). 
Let us first consider the pure diffusion equation 
$$\partial_tT-\kappa\Delta T=0\,, \quad T(x,0)=\theta_0(x)\quad \mbox{ in } \R^3\,,$$
under the assumptions $\rs(\theta_0)=0$, for simplicity.
The lower bound for $T$
$$\|T (t)\|_{2}\gtrsim_{M,\delta,\kappa} (1+t)^{-\frac 34}$$ 
(see Lemma \ref{LB-HE} with $\rs=0$) together with the upper bound
$$\|\nabla T(t)\|_2\lesssim_{\kappa}(1+t)^{-\frac 34-\frac 12}$$
yields
\begin{equation}\label{lambda-lb-pd}
\lambda(t)\gtrsim_{M,\delta,\kappa}(1+t)^{\frac 12}\,.
\end{equation}
On the other hand, considering the pure advection equation 
$$\partial_t \theta+u\cdot \nabla \theta=0, \quad \theta(x,0)=\theta_0(x),$$
imitating the computations in \cite{LTD2011} and using that the energy is exactly conserved, i.e. $\|\theta(t)\|_2=\|\theta_0\|_2$
for all time, we obtain 
\begin{equation}\label{lambda-lb-pa}
 \begin{array}{llll}
  \lambda(t)\gtrsim e^{-\frac{[(1+t)^{-\nu+1}-1]}{-\nu+1}}\quad &\mbox{ under assumption } \eqref{gradient-condition1} \,,\\
  \lambda(t)\gtrsim (1+t)^{-1}\quad &\mbox{ under assumption } \eqref{gradient-condition2}\,,\\
  \lambda(t)\gtrsim e^{-\frac{[(1+t)^{-\nu+1}-1]}{-\nu+1}}\quad &\mbox{ under assumption } \eqref{gradient-condition3}\,.\\
 \end{array}
\end{equation}
If we denote the lower bound estimates for $\lambda$ obtained in \eqref{lambda-lb-ad}, \eqref{lambda-lb-pd} and \eqref{lambda-lb-pa} generically with $g(t)$, then we can described the behavior of the filamentation length (under the assumptions $\rs(\theta_0)=0$, $\|\theta_0\|_{2}<\infty$ and $\|\nabla^{-1}\theta_0\|_{2}<\infty)$  as
$$\lambda(t)\gtrsim g(t)\stackrel{t\rightarrow \infty}{\longrightarrow} g^{\infty}$$
with $g^{\infty}$ specified in the following chart:
\vspace{0.5cm}
\begin{center}
\begin{tabular}{ |c | c | c | c| } 
\hline
& $0 \leq \nu < 1$ & $\nu = 1$ & $\nu > 1$ \\
\hline
Pure Advection & $g^{\infty}= 0$ & $g^{\infty}= 0$ & $g^{\infty}= e^{-\frac{1}{\nu-1}}$ \\
\hline 
Advection-Diffusion & $g^{\infty}=0$ & $g^{\infty}=\infty$ & $g^{\infty}=\infty$ \\
\hline 
Pure Diffusion &$g^{\infty}=\infty$ & $g^{\infty}=\infty$   & $g^{\infty}=\infty$ \\
\hline
\end{tabular}
\end{center}
\vspace{0.5cm}

It would be interesting to investigate whether the lower bound estimates for $\lambda(t)$ are sharp for some specific flow and to see whether the class of ``admissible'' velocity fields we can consider in our analysis could be further extended by using a combination of our argument with the Aronson-type estimate of Maekawa \cite{Maek08}.
Another interesting question is whether, going beyond a perturbative analysis, enhanced dissipation phenomena can be observed also in $\R^d$.
In fact all the enhanced dissipation results available so far have been derived in periodic or bounded domains (at least in one direction).
Furthermore, we observe that the enhanced dissipation phenomena in $\R^2$ observed in \cite{CZD20} was proven by choosing a velocity field growing at infinity (thus not in any $L^p$ space and, in particular, not in the class of velocity fields we can consider in our analysis).
Finally, we want to mention the recent work of Bedrossian, Blumenthal and Punshon-Smith \cite{BBP21}, where uniform upper bounds of the type $\|\theta\|_{H^{-1}}\leq D_{\kappa}(\omega, u)\|\theta\|_{H^{1}}$ were obtained in $\mathbb{T}^d$ and $D_{\kappa}(\omega,u)$ is a P-a.s. finite random constant.
In Remark 1.7 the authors argue that if their result would be proven to be sharp, then their results would imply the Batchelor-scale conjecture as formulated by Charles Doering and others in \cite{DM2018}. 
We refer the reader also to \cite{Pappa21} for a new interesting stochastic approach to the question of upper bounds for the $H^{-1}$-norm of the passive tracer advected by an Ornstein-Uhlenbeck velocity field.
We remark that the Batchelor-scale conjecture, has been observed in numerical simulations (for example \cite{DM2018,CGGA2017}), reduced dyadic models \cite{DM2018-2}, and it is believed to hold on bounded domains or the torus.
What role the geometry plays in the Batchelor-scale conjecture, remains to be explained.
\appendix
\section{Pure Advection}
Consider the pure advection equation in the whole space:
\begin{equation}\label{PA}
\begin{cases}
 \partial_t\theta+u\cdot \nabla \theta=0 & \mbox{ in } \R^d\times (0,\infty)\\
 \nabla \cdot u=0 & \mbox{ in } \R^d\times (0,\infty)\\
 \theta(x,0)=\theta_0(x) & \mbox{ in } \R^d\,.
\end{cases}
\end{equation}
under one of the following conditions for the gradient of the velocity field:
\begin{align}
 \|\nabla u(t)\|_{\infty}&\sim (1+t)^{-\nu} \qquad \mbox{ with } \nu>1 \label{gc1}\\
 \|\nabla u(t)\|_{\infty}&\sim (1+t)^{-1} \label{gc2}\\
 \|\nabla u(t)\|_{\infty}&\sim (1+t)^{-\nu} \qquad \mbox{ with } 0\leq\nu<1 \label{gc3}
\end{align}
Imitating the argument in of Lin, Thiffeault and Doering \cite{LTD2011}, 
we compute  $\lambda$ (defined in \eqref{def-lambda}) for the pure advection equation on the whole space:
using the identity 
\begin{equation*}
\frac{d}{dt}\|\nabla^{-1}\theta(t)\|_2^2=2\int \nabla^{-1}\theta\,\cdot \nabla u\,\cdot \nabla^{-1}\theta\, dx\,,
\end{equation*}
which is derived by testing equation \eqref{PA} with $\phi=\Delta^{-1}\theta$, integrating by parts and noticing that
\begin{equation*}
\begin{array}{rlll}
\int u\cdot\nabla\theta\,\Delta^{-1}\theta\, dx&=& \int \nabla \cdot(u\theta)\Delta^{-1}\theta\\
&=&\int \nabla \cdot (u\, \Delta\phi)\, \phi\\
&=&-\int \sum_{i,j}\partial_i(u^i\partial_{jj}\phi)\phi\\
&=&-\int \sum_{i,j} u^i\partial_{jj}\phi\partial_i\phi\\
&=&\int \sum_{i,j}\partial_j(u^i\partial_i\phi)\partial_j\phi\\
&=&\int \sum_{i,j} \partial_ju^i|\partial_i\phi|^2+\int u^i\partial_i\frac{|\partial_j\phi|^2}{2}\, dx\\
&=&\int \nabla^{-1}\theta\,\cdot \nabla u\,\cdot \nabla^{-1}\theta\, dx\,,
\end{array}
\end{equation*}
we obtain 
$$\frac{d}{dt}\|\nabla^{-1}\theta(t)\|_2^2\geq-2\|\nabla u(t)\|_{\infty}\|\nabla^{-1}\theta(t)\|_2^2\,.$$
Inserting condition \eqref{gc1} or \eqref{gc3} and applying Gronwall's inequality, we have 
$$\|\nabla^{-1}\theta(t)\|_2^2\gtrsim \|\nabla^{-1}\theta_0\|_2^2e^{-2\frac{[(1+t)^{-\nu+1}-1]}{-\nu-1}}\,.$$
 Instead, inserting condition  \eqref{gradient-condition2} we get
$$\|\nabla^{-1}\theta(t)\|_2^2\gtrsim \|\nabla^{-1}\theta_0\|_2^2(1+t)^{-2}\,.$$
Since $\|\theta(t)\|_2=\|\theta_0\|_2$ for all $t\geq 0$, we have 
\begin{equation*}
 \begin{array}{llll}
  \lambda(t)\gtrsim C\, e^{-\frac{[(1+t)^{-\nu+1}-1]}{-\nu+1}}\quad &\mbox{ under assumption } \eqref{gc1} \,,\\
  \lambda(t)\gtrsim C\,(1+t)^{-1}\quad &\mbox{ under assumption } \eqref{gc2}\,,\\
  \lambda(t)\gtrsim C\,e^{-\frac{[(1+t)^{-\nu+1}-1]}{-\nu+1}}\quad &\mbox{ under assumption } \eqref{gc3}\,,\\
 \end{array}
\end{equation*}
where $C=\frac{\|\nabla^{-1}\theta_0\|_2}{\| \theta_0\|_2}$.
%

\end{document}